\newcommand{\bydef}{\stackrel{\rm def}{=}}
\newcommand{\clm}{{\mathcal{M}}}
\newcommand{\Dbar}{\overline{\mathbb D}}
\newcommand\tr{{\mbox{tr}}}
\newtheorem{thm}{Theorem}[section]
\newtheorem{lem}[thm]{Lemma}
\newtheorem{defn}[thm]{Definition}
\numberwithin{equation}{section}
\def\textmatrix#1&#2\\#3&#4\\{\bigl({#1 \atop #3}\ {#2 \atop #4}\bigr)}
\def\dispmatrix#1&#2\\#3&#4\\{\left({#1 \atop #3}\ {#2 \atop #4}\right)}
\begin{document}
\title[Symmetrized bidisk]
{holomorphic functions on the symmetrized bidisk - realization, interpolation and Extension}

\author[Bhattacharyya]{Tirthankar Bhattacharyya}
\address[Bhattacharyya]{Department of Mathematics, Indian Institute of Science, Banaglore 560 012}
\email{tirtha@member.ams.org}

\author[Sau]{Haripada Sau}
\address[Sau]{Department of Mathematics, Indian Institute of Science, Banaglore 560 012}
\email{sau10@math.iisc.ernet.in}

\thanks{MSC 2010: Primary: 32A10, 32A70, 46E22, 47A13, 47A20, 47A25. \\ This research is supported by University Grants Commission, India via CAS.}

\maketitle
\begin{abstract}{There are three new things in this paper about the open
            symmetrized bidisk $\mathbb G = \{ (z_1+z_2, z_1z_2) : |z_1|, |z_2| < 1\}$. They are motivated in the Introduction. In this Abstract, we mention them in the order in which they will be proved.

\begin{enumerate}

\item The Realization Theorem: A realization formula is demonstrated for every $f$ in the norm unit ball of $H^\infty(\mathbb G)$.
\item The Interpolation Theorem: Nevanlinna-Pick interpolation theorem is proved for data from the symmetrized bidisk and a specific formula is obtained for the interpolating function.
    \item The Extension Theorem: A characterization is obtained of those subsets $V$ of the open
            symmetrized bidisk $\mathbb G$ that have the
            property that every function $f$ holomorphic in
            a neighbourhood of $V$ and bounded on $V$ has an $H^\infty$-norm preserving extension to
the            whole of $\mathbb G$. \end{enumerate}

    }

\end{abstract}

\section{Introduction}

\subsection{Extension} Theorem B of H. Cartan about sheaf cohomology
on Stein domains implies the following as a special case.

\vspace*{5mm}

\noindent \textbf{Theorem (H. Cartan).} {\em
If $V$ is an analytic variety in a domain of holomorphy $\Omega$ and if $f$ is a holomorphic function on $V$, then there is a holomorphic function $g$ on $\Omega$ such that $g=f$ on $V$.}

\vspace*{5mm}

About extensions which are
not just holomorphic, but also norm preserving, the notable
success has been extension of holomorphic functions from
submanifolds of Stein manifolds with weighted $L^2$ estimates
\cite{OT}. Attempts of extension of bounded holomorphic functions
from submanifolds, preserving $H^\infty$-norm have required
stringent sufficient conditions, see \cite{Alex} and
\cite{Henkin}.

The main result of this note relates this question of
extension to Hilbert space operator theory. When $\Omega$ is the symmetrized bidisk
$$ \mathbb G = \{ (z_1+z_2, z_1z_2) : |z_1|, |z_2| < 1\}$$
and $V$ is a subset of $\Omega$, we find a property of $V$ that is necessary and sufficient to ensure that
every function holomorphic in a neighbourhood of $V$ and bounded on $V$ extends to the whole of the summetrized bidisk in such a way
that the $H^\infty$-norm of the original function on $V$ is not increased. The symbol Hol$^\infty(V)$
stands for those bounded functions $f$ on $V$ which have a holomorphic extension to a neighbourhood of $V$.

Let $\mathcal A$ be a subset of Hol$^\infty(V)$.
We shall explain two properties of the set $V$ below - the $\mathcal A$-extension property and the property of being an $\mathcal A$-von Neumann set.
The $\mathcal A$-extension property means that whenever $f \in
\mathcal A$, there is a bounded holomorphic function $g$ on whole
of $\mathbb G$ such that \begin{equation} \label{Aext}  g|_V = f
\mbox{ and } \sup_{\mathbb G} |g| = \sup_V |f|. \end{equation}
Note that an extension of the form (\ref{Aext}) is what we want to achieve, motivated by the theorem of Cartan. The challenge is to decide what kind of sets $V$ will allow us that.

The motivation for defining an $\mathcal A$-von Neumann set comes from the 1951 paper of von Neumann where he showed that for a contraction $T$ on a Hilbert space and a polynomial $p$, the following inequality is satisfied.
$$ \| p(T) \| \le \sup_{z \in \mathbb D} |p(z)|.$$
A dozen years later, Ando came up with an elegant generalization of this inequality. If $(T_1, T_2)$ is a commuting pair of contractions, and $p$ is a polynomial in two variables, then
$$ \| p(T_1, T_2) \| \le \sup_{z_1, z_2 \in \mathbb D} |p(z_1, z_2)|.$$
A polynomially  convex compact set $X \subseteq \mathbb C^2$ is called a spectral set for a pair $(T_1, T_2)$ of
commuting bounded operators if $\sigma (T_1, T_2) \subseteq X$ and
$$ \| p(T_1, T_2) \| \le \sup_X | p |$$
for any polynomial $p$ in two variables. Put in this way, a pair
of commuting contractions is, by Ando's inequality, the same as a
commuting pair of bounded operators which has the closed bidisk as a spectral set. The
symmetrized bidisk is a non-convex, but polynomially convex subset of
$\mathbb C^2$. Its geometry has been studied in \cite{ay-blm} and
\cite{ay-jga}. Study of commuting operator pairs which have the symmetrized
bidisk as a spectral set has been extensively carried out in
\cite{ay-jfa}, \cite{ay-ems}, \cite{ay-jot} and \cite{BPSR}.

\begin{defn} \label{Gamma-contraction}
A pair of commuting bounded operators $(S, P)$ on a Hilbert
space $\mathcal H$ having the closed symmetrized bidisk $\Gamma$
as a spectral set is called a $\Gamma$-contraction. Thus $(S,P)$
is a $\Gamma$-contraction if and only if $\| f(S,P) \| \le
\sup_{\mathbb G} |f|$ for all polynomials $f$ in two variables.
\end{defn}

This terminology is due to Agler and Young. Many examples of
$\Gamma$-contractions are discussed in \cite{BPSR}.

It is advantageous to broaden the class of functions to include holomorphic functions, especially since a functional calculus is available. If $V \subseteq \mathbb C^2$, say that a pair of commuting operators $(T_1, T_2)$ on a Hilbert space is $subordinate$ to $V$ if the Taylor joint spectrum $\sigma (T_1, T_2) \subseteq V$ and $g(T_1, T_2) = 0$ whenever $g$ is holomorphic in a neighbourhood of $V$ and $g|_V = 0$. If $f$ is a function on $V$ that has a holomorphic extension in a neighbourhood of $V$ and $(T_1, T_2)$ is subordinate to $V$, define $f(T_1, T_2)$ by setting $f(T_1, T_2) = g(T_1, T_2)$ where $g$ is any holomorphic extension of $f$ in a neighbourhood of $V$. Given $\mathcal A$ as above, $V$ is called an $\mathcal A$-von Neumann set if for
any $\Gamma$-contraction $(S,P)$ subordinate to $V$ and any $f \in \mathcal A$,
$$ \| f(S, P) \| \le \sup_V |f|.$$

The main discovery of this paper is the following theorem.

\vspace*{5mm}

\noindent \textbf{Extension Theorem}. {\em
Let $V \subseteq \mathbb G$. Let $\mathcal A \subseteq$
Hol$^\infty(V)$. Then $V$ has the $\mathcal A$-extension property
if and only if $V$ is an $\mathcal A$-von Neumann set.}

\vspace*{5mm}

Note that $V$ is not assumed to be a variety in this. An analogous theorem for the bidisk was proved by Agler and McCarthy in \cite{AMc-Ann}.

\subsection{Interpolation} The route to proving the Extension Theorem
is through an Interpolation Theorem which has its own independent interest.
Interpolation means the following. Given $n$ points $\lambda_1,
\lambda_2, \ldots , \lambda_n$ in $\Omega$ and $n$ points $w_1,
w_2, \ldots , w_n$ in the closed unit disk in the plane, we want
to know whether there is an $H^\infty(\Omega)$ function $f$ such
that $f(\lambda_i) = w_i$ for $i=1,2, \ldots , n$. Pick's
classical theorem tells us that when $\Omega = \mathbb D$, this
happens if and only if the $n \times n$ matrix
$$ \left( \left( \; \frac{1 - w_i \bar{w}_j}{1 - \lambda_i \bar{\lambda}_j} \; \right) \right) = \left( \left( \; (1 - w_i \bar{w}_j) k( \lambda_i , \lambda_j)\; \right) \right), $$
where $k$ is the Szego kernel or the reproducing kernel of the Hardy space of the disk, is non-negative definite. The matter is not so straightforward in, say, the bidisk. Although there is a well-studied Hardy space whose reproducing kernel is
$$\frac{1}{(1 - z_1\bar{w}_1)(1 - z_2\bar{w}_2)}$$
no formulation of a criterion for interpolation in terms of this kernel alone is known. Instead, Agler proved a criterion in terms of a family of kernels.

\begin{defn} A scalar valued function $k$ on $\Omega \times \Omega$ is called a kernel (respectively a weak kernel) if $\sum_{i,j=1}^n c_i \bar{c}_j k(z_i, z_j) > 0$ (respectively $\sum_{i,j=1}^n c_i \bar{c}_j k(z_i, z_j) \ge 0$ ) for any positive integer $n$, any $n$ points $z_1, z_2, \ldots ,z_n$ in $\Omega$ and any $n$ scalars $c_1, c_2, \ldots ,c_n$, not all zeros. If, moreover, $k$ is holomorphic in the first variable and anti-holomorphic in the second variable, then it is called a holomorphic kernel (respectively a holomorphic weak kernel). \label{kernel} \end{defn}

Given a kernel $k$, there is a Hilbert space of functions $H_k$ such that the family of functions $\{ k(\cdot , w): w \in \Omega\}$ is contained in $H_k$, is a total set in $H_k$ and has the reproducing property, i.e.,
$$f(z) = \langle f , k(\cdot , z)\rangle$$
for an $f$ in $H_k$ and any $z$ in $\Omega$. Because of this reproducing property, the Hilbert space $H_k$ is called a reproducing kernel Hilbert space. If $k$ is a holomorphic kernel, then $H_k$ consists of holomorphic functions.

Agler showed that there is an $f$ in $H^\infty(\mathbb D^2)$ with $f(\lambda_i) = w_i$ for $i=1,2, \ldots , n$ if and only if
$$ \left( \left( \; (1 - w_i \bar{w}_j) k( \lambda_i , \lambda_j)\; \right) \right)_{i,j=1}^n $$
is a weak kernel for every kernel $k$ on the bidisk which has the property that the co-ordinate multiplications are contractions on $H_k$. This showed the need for more than one kernel.

A multiplier on the reproducing kernel Hilbert space $H_k$ is a holomorphic function $\varphi$ defined on $\mathbb G$ such that the multiplication operator
 $$ M_\varphi : f \rightarrow \varphi f$$
 is a bounded operator on $H_k$. Of particular importance to us will be the following multipliers.
\begin{equation} (M_s f)(s,p) = sf(s,p) \mbox{ and } (M_p f)(s, p) = p f(s,p). \label{MsMp} \end{equation}

\begin{defn} \label{admissibility}

A kernel $k((s,p), (t,q))$ on $\mathbb G$ is called admissible if the pair of multiplication operators $(M_s, M_p)$ on the reproducing kernel Hilbert space $H_k$ is a $\Gamma$-contraction on $H_k$.

\end{defn}

The interpolation result is the following.

\vspace*{5mm}

\noindent \textbf{Interpolation Theorem.} {\em
Given $\lambda_1, \lambda_2, \ldots ,\lambda_n$ in $\mathbb G$ and
$w_1, w_2, \ldots , w_n$ in $\overline{\mathbb D}$, there is a
function $f$ in $H^\infty(\mathbb G)$ with $\| f \|_\infty \le 1$ and satisfying $f(\lambda_i) = w_i, i=1,2, \ldots ,n$
if and only if for every admissible kernel $k$, the matrix
\begin{equation} \label{Pick-condition} \left( \left( \; \; (1 - w_i
\overline{w}_j) k(\lambda_i , \lambda_j) \; \; \right) \right)\end{equation}
is positive semi-definite.}

There is a dual way of stating the Interpolation Theorem in terms of the parametrized co-ordinate functions $\varphi$. These functions are introduced in the next sub-section and the dual statement of the Interpolation Theorem is postponed till the Epilogue section.

\vspace*{5mm}

\subsection{Realization}
The best way to prove an interpolation theorem as above is by exhibiting a realization formula. This is what we do. To get a feel of what a realization formula is all about, we mention the remarkable result that a function $f$ is in $H^\infty(\mathbb D)$ and satisfies $\| f \|_\infty \le 1$ if and only if there is a Hilbert space $H$ and a unitary operator
$$U = \dispmatrix A & B \\ C & D \\ : \mathbb C \oplus H \rightarrow \mathbb C \oplus H$$ such that
$$ f(z) = A + zB (I - zD)^{-1} C.$$
Agler generalized this elegantly to the bidisk in \cite{Agler}. He showed that a function $f$ is in $H^\infty(\mathbb D^2)$ and satisfies $\| f \| \le 1$ if and only if there is a graded Hilbert space $H = H_1 \oplus H_2$ and a unitary operator
$$U = \dispmatrix A & B \\ C & D \\ : \mathbb C \oplus H \rightarrow \mathbb C \oplus H$$ such that writing $P_1$ for the projection from $H$ onto $H_1$ and $P_2$ for the projection from $H$ onto $H_2$, we have
$$ f(z) = A + B (z_1 P_1 + z_2 P_2) (I - D (z_1 P_1 + z_2 P_2) )^{-1} C.$$

In the same tradition, the following theorem is called the Realization Theorem because the fourth item in the list of equivalent statements realizes the function $f$ as a concrete formula. The intermediate steps are interesting in their own right. However, before we state the theorem, we need to introduce a parametrized family of functions. For $\alpha \in \Dbar$ and $(s,p) \in \mathbb G$, let
$$ \varphi(\alpha, s , p) = \frac{2 \alpha p - s}{2 - \alpha s}$$
which is defined for all $(\alpha, s, p)$ satisfying $2 - \alpha s \neq 0$. Since $|s| < 2$ for all $(s,p) \in \mathbb G$, this function is well-defined on $\Dbar \times \mathbb G$. The notation $\varphi ( \alpha, \cdot)$ will mean that for a fixed $\alpha$, we are considering it as a function on $\mathbb G$ and $\varphi( \cdot , s, p)$ will mean that for a fixed $(s,p)$, we are considering it as a function on $\Dbar$. These functions will be called the parametrized co-ordinate functions because Agler and Young proved in Theorem 2.1 of \cite{ay-jga} that
\begin{equation} \label{co-ordinates} (s,p) \in \mathbb G \mbox{ if and only if } \varphi(\alpha, s, p) \in \mathbb D \end{equation} for all $\alpha$ in the closed unit disk. Thus, the membership of $(s,p)$ in $\mathbb G$ is determined by whether the parametrized co-ordinate functions evaluated at $(s,p)$ have modulus less than one for every parameter $\alpha$. We note that for every $\alpha \in \Dbar$, the function $\varphi(\alpha , \cdot)$ is in the norm unit ball of $H^\infty(\mathbb G)$ and for every $(s,p) \in \mathbb G$, the function $\varphi( \cdot , s, p)$ is in $C(\Dbar)$. If we write $\lambda$ for the pair $(s,p)$ which we may sometimes do, for example in the statement of the Interpolation Theorem, then we shall write $\varphi(\cdot , \lambda)$ for $\varphi( \cdot , s, p)$.

We shall also need to consider positive semi definite kernels $\Delta : \mathbb G \times \mathbb G \rightarrow C( \Dbar )^*$ where $C( \Dbar )^*$ is the space of all bounded linear functionals on the Banach space $C( \Dbar )$. These are functions $\Delta((s,p),(t,q))$ on $\mathbb G \times \mathbb G$ which satisfy the property  that
$$ \sum_{i,j=1}^N c_i \overline{c_j} \Delta \left( (s_i, p_i), (s_j,p_j) \right) \left( h_i \overline{h_j} \right) \ge 0$$ for any natural number $N$, any $N$ scalars $c_1, c_2, \ldots , c_N$ and any $N$ functions $h_1, h_2, \cdots ,h_N$. Such positive semi definite kernels have been considered in the literature, often in more generality, see \cite{Ambrozie}, \cite{BBF} and \cite{DM}. If the inequality is strict, then $\Delta$ is called a positive definite kernel.

An example of such a positive semi definite kernel $\Delta$ is easily obtained from a regular Borel measure $\mu$ on $\Dbar$ and a function $\delta : \Dbar \times \mathbb G \times \mathbb G$ which has the property that it is a scalar valued weak kernel on $\mathbb G \times \mathbb G$ for every $\alpha$ in $\Dbar$ and is an $L^1(\mu)$ function for every fixed $(s,p)$ and $(t,q)$ in $\mathbb G$. Define $\Delta$ by
\begin{equation} \label{ExampleofDelta} \Delta\left( (s,p), (t,q) \right) (f) = \int_{{\Dbar}} f(\cdot) \delta(\cdot , (s,p), (t,q) ) d\mu. \end{equation}
We shall see a $\Delta$ of this form in Section 3.

\vspace*{5mm}

\noindent \textbf{Realization theorem.} {\em
 The following are equivalent.

\begin{description}
        \item[H] $f$ is a function in $H^\infty(\mathbb G)$ with $\| f \|_\infty \le 1$.
        \item[M] $ (1 - f(s,p) \overline{f(t,q)}) k( (s,p) , (t,q)) $ is a weak kernel for every admissible kernel $k$.
        \item[D] There is a positive semi definite kernel $\Delta : \mathbb G \times \mathbb G \rightarrow C( \Dbar )^*$ such that
        $$ 1 - f(s,p) \overline{f(t,q)} = \Delta ( (s,p), (t,q)) \big( 1 - \varphi(\cdot, s, p) \overline{\varphi(\cdot , t, q)} \big) .$$
        \item[R] There is a Hilbert space $H$, a unital $*$-representation $\pi : C(\Dbar) \rightarrow B(H)$ and a unitray $V : \mathbb C \oplus H \rightarrow \mathbb C \oplus H$ such that writing $V$ as
            $$ V = \left(
                     \begin{array}{cc}
                       A & B \\
                       C & D \\
                     \end{array}
                   \right)$$
                   we have $f(s,p) = A +  B \pi(\varphi(\cdot, s, p)) \big( I_H -  D \pi(\varphi(\cdot, s, p)) \big)^{-1} C$.
      \end{description}}

      An operator $V$ as above is often called a colligation.

      The logical build-up of the paper is that the Realization Theorem implies the Interpolation Theorem which implies the Extension Theorem. Section 2 contains a natural example of an admissible kernel. It is called the Szego kernel and the associated Hilbert space of holomorphic functions is called the Hardy space of the symmetrized bidisk. We shall see that $H^\infty(\mathbb G)$ is the multiplier algebra of the Hardy space. Section 3 has the proof of the realization theorem. Section 4 proves the Nevanlinna-Pick interpolation, the criteria for which is not in terms of the Szego kernel alone, but involves a whole family of kernels, the admissible ones to be precise. Section 5 proves the extension theorem.

The functions $\varphi$ will play the role of "test functions" in the sense of \cite{BallGH} and \cite{DM} for example, although we call them parametrized co-ordinate functions since they really behave as co-oridinates in the case of the symmetrized bidisk. The idea of using a positive semi-definite kernel taking values in the dual space of a suitable Banach space of functions originated in \cite{Ambrozie}, as far as we could see, and had been of great use in later papers \cite{BallGH}, \cite{DM}. The over-arching idea of using a Hahn-Banach separation argument is due to Agler \cite{Agler} and has been found useful in numerous later papers. Many of the arguments of Sections 3 and 4 of this paper are motivated by the book \cite{AMc-book}.

\section{Hardy Space}

\subsection{The space} The function theory on the symmetrized
bidisk provides us with a natural Hilbert space of holomorphic functions on $\mathbb G$.

\begin{defn}\label{GHardy}

The Hardy space $H^2(\mathbb G)$ of the symmetrized bidisk is the vector space of those holomorphic functions $f$ on $\mathbb G$ which satisfy
$$\mbox{sup}_{\,0<r<1}\int_{\mathbb T \times \mathbb T} |f\circ \pi (r\,e^{i\theta_1}, r\,e^{i\theta_2})|^2
|J(r\,e^{i\theta_1}, r\,e^{i\theta_2})|^2 d\theta_1 d\theta_2 <\infty $$
where $J$ is the complex Jacobian of the
symmetrization map
$$\pi (z_1, z_2) = (z_1+z_2 , z_1z_2)$$
and $d\theta$ is the normalized Lebesgue measure on the unit circle $\mathbb T = \{\alpha
:|\alpha| = 1\}$.
The norm of $f\in H^2(\mathbb G)$ is defined to be
$$\|f\|= \|J\|^{-1}\Big \{\mbox{sup}_{0<r<1}\int_{\mathbb
T \times \mathbb T}|f\circ \pi (r\,e^{i\theta_1}, r\,e^{i\theta_2})|^2 |J(r\,e^{i\theta_1}, r\,e^{i\theta_2})|^2
d\theta_1 d\theta_2 \Big \}^{1/2},$$
where $\|J\|^2={{\int_{\mathbb T \times \mathbb T}}} |J|^2 d\theta_1 d\theta_2.$ \end{defn}

This ensures that the norm of the constant function $1$ is $1$. We used exhaustion above. It is reminiscent of the definition of the Hardy space of the bidisk $H^2(\mathbb D^2)$ which is the collection of holomorphic functions on the bidisk $\mathbb D \times \mathbb D$ such that
$$ \|f\|_{H^2(\mathbb D^2)} \bydef \sup_{0<r<1} \Big( \int_{{\mathbb{T}} \times \mathbb T} | f(r e^{i \theta_1}, r e^{i \theta_2}) |^2 d \theta_1 d\theta_2 \Big)^{1/2}
< \infty.
$$
The relationship between these two Hardy spaces (of the symmetrized bidisk and of the bidisk) goes deeper because of the following lemma from \cite{MSRZ}.

 \begin{lem}
As a Hilbert space, $H^2(\mathbb G)$ is isomorphic to the subspace
$$H^2_{\rm anti} (\mathbb D^2) \bydef \{ f \in H^2(\mathbb D^2) : f(z_1 , z_2) = - f(z_2 , z_1) \}$$
of $H^2(\mathbb D^2)$. The isomorphism is given by $\tau_1(f) = J (f \circ \pi)$. \end{lem}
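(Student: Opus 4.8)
The plan is to verify by direct computation that $\tau_1(f)=J\,(f\circ\pi)$ is, up to a harmless constant, a unitary onto $H^2_{\rm anti}(\mathbb D^2)$. First I would identify the Jacobian: with $s=z_1+z_2$ and $p=z_1z_2$,
$$J=\det\begin{pmatrix} 1 & 1\\ z_2 & z_1\end{pmatrix}=z_1-z_2,$$
so that $\|J\|^2=\int_{\mathbb T\times\mathbb T}|z_1-z_2|^2\,d\theta_1\,d\theta_2=2$. The next step is the elementary norm identity underpinning everything: if $f$ is holomorphic on $\mathbb G$ then $f\circ\pi$ is holomorphic on $\mathbb D^2$, and since $J$ evaluated at $(re^{i\theta_1},re^{i\theta_2})$ equals $r(e^{i\theta_1}-e^{i\theta_2})$, the two defining integrals coincide:
$$\|\tau_1 f\|_{H^2(\mathbb D^2)}^2=\sup_{0<r<1}\int_{\mathbb T\times\mathbb T}|f\circ\pi|^2\,|J|^2\,d\theta_1\,d\theta_2=\|J\|^2\,\|f\|_{H^2(\mathbb G)}^2 .$$
Hence $f\in H^2(\mathbb G)$ exactly when $\tau_1 f\in H^2(\mathbb D^2)$, and $\tau_1$ is bounded and bounded below; in particular it is injective (alternatively, $\tau_1 f=0$ forces $f\circ\pi\equiv0$ off the diagonal, hence $f\equiv0$ on $\mathbb G=\pi(\mathbb D^2)$). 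Moreover $f\circ\pi$ is symmetric because $\pi(z_1,z_2)=\pi(z_2,z_1)$, while $J=z_1-z_2$ is antisymmetric, so $\tau_1 f$ is antisymmetric, and by the displayed identity $\tau_1 f\in H^2_{\rm anti}(\mathbb D^2)$.

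The main work is surjectivity. Given $g\in H^2_{\rm anti}(\mathbb D^2)$, antisymmetry forces $g$ to vanish on the diagonal $\{z_1=z_2\}$; since $z_1-z_2$ generates the ideal of that smooth hypersurface one obtains $g=(z_1-z_2)\,h$ with $h$ holomorphic on $\mathbb D^2$ (equivalently, group the Taylor series of $g$ into pairs $z_1^a z_2^b-z_1^b z_2^a$ and divide $z_1-z_2$ out of each). As $z_1-z_2$ and $g$ are both antisymmetric, $h$ is symmetric. Now the transposition $(z_1,z_2)\mapsto(z_2,z_1)$ acts freely on $\mathbb D^2\setminus\{z_1=z_2\}$, and $\pi$ exhibits $\mathbb G\setminus\{(s,p):s^2=4p\}$ as the quotient, a complex manifold; the invariant function $h$ therefore descends to a holomorphic function $F$ there. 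Since $h$ is continuous and $\pi$ is proper, $F$ is locally bounded near the thin analytic set $\{(s,p)\in\mathbb G:s^2=4p\}$, so Riemann's removable-singularity theorem extends $F$ holomorphically to all of $\mathbb G$, with $F\circ\pi=h$. Then $\tau_1 F=J\,(F\circ\pi)=(z_1-z_2)h=g$, and the norm identity gives $\|F\|_{H^2(\mathbb G)}^2=\|J\|^{-2}\|g\|_{H^2(\mathbb D^2)}^2<\infty$, so $F\in H^2(\mathbb G)$ and $\tau_1 F=g$.

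I expect the only genuinely non-formal ingredients to be this descent of symmetric holomorphic functions across the branch locus of $\pi$ and the division lemma $g=(z_1-z_2)h$; both are standard facts about quotients by finite group actions and about Riemann extension, while everything else is bookkeeping with the two norm definitions. One small point worth recording: the identity above gives $\|\tau_1 f\|=\|J\|\,\|f\|=\sqrt2\,\|f\|$, so the genuinely isometric isomorphism is $\|J\|^{-1}\tau_1$, and the normalizing factor $\|J\|^{-1}$ already built into Definition \ref{GHardy} is exactly what makes the two Hilbert-space norms correspond under $\tau_1$. Since the lemma is quoted from \cite{MSRZ}, one could of course also simply cite it.
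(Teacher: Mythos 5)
Your proof matches the paper's approach: both deduce antisymmetry of $\tau_1 f$ from the antisymmetry of $J=z_1-z_2$, both prove surjectivity by factoring an antisymmetric $g\in H^2(\mathbb D^2)$ as $(z_1-z_2)h$ with $h$ symmetric and holomorphic on $\mathbb D^2$, descending $h$ along $\pi$ to a holomorphic $F$ on $\mathbb G$ (you supply the branch-locus and Riemann-extension details that the paper leaves implicit), and both close with the norm identity built into the definition of the $H^2(\mathbb G)$-norm. Your side remark is also a genuine correction of a small slip: with $\tau_1 f = J\,(f\circ\pi)$ one has $\|\tau_1 f\|_{H^2(\mathbb D^2)}=\|J\|\,\|f\|_{H^2(\mathbb G)}=\sqrt 2\,\|f\|_{H^2(\mathbb G)}$, so the actual unitary is $\|J\|^{-1}\tau_1$; the paper asserts $\tau_1$ ``preserves norm'' but then in the proof that the Szego kernel is admissible implicitly uses $\tau_1$ with the extra factor $\|J\|^{-1}$, confirming your reading.
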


\begin{proof}
It is straightforward by integration that $\tau_1$ preserves norm. Moreover $\tau_1(f)$ is an anti-symmetric function of $z_1$ and $z_2$, i.e., $\tau_1f(z_1 , z_2) = - \tau_1f(z_2 , z_1)$. This is because of $J$. The value of $J$ at a point $(z_1, z_2)$ is $z_1 - z_2$. So the product of the symmetric function $f \circ \pi$ and the value of the Jacobian $J$ at $(z_1, z_2)$ is anti-symmetric. Let $H^2_{\rm anti}(\mathbb D^2)$ be the closed subspace of
anti-symmetric functions in the Hardy space of the bidisk, i.e.,
$H^2_{\rm anti}(\mathbb D^2)$ is the space of those $f$ in
$H^2(\mathbb D^2)$ which satisfy $f(z_1, z_2) = -f(z_2 , z_1)$. Every anti-symmetric function $g$ in $H^2(\mathbb D^2)$ is divisible by $z_1 - z_2$.
Thus the function $J^{-1}g$ is a symmetric holomorphic function on $\mathbb D^2$. So there is a holomorphic function $f$ on $\mathbb G$ such that $J^{-1} g = f \circ \pi$. This $f$ is in $H^2(\mathbb G)$ because $g = J (f \circ \pi )$ is in $H^2(\mathbb D^2)$ which precisely is the definition of an $H^2(\mathbb G)$ function. So $\tau_1(f) = g$. Consequently, every anti-symmetric function in $H^2(\mathbb D^2)$ is in the range of $\tau_1$. \end{proof}

This identification enables us to show that $H^2(\mathbb G)$ is a reproducing kernel Hilbert space in the sense of Definition \ref{kernel}. We shall use Theorem 3.1 of \cite{MSRZ} to note that $H^2_{\rm anti}(\mathbb D^2)$ has the following reproducing kernel:
$$\frac{(z_1 - z_2)((w_1 - w_2)}{2(1 - z_1 \bar{w}_1)(1 - z_1 \bar{w}_2)(1 - z_2 \bar{w}_1)(1 - z_2 \bar{w}_2)}.$$
Using the isomorphism $\tau_1$ above, a straightforward computation gives the formula for the reproducing kernel of the Hardy space of the symmetrized bidisk. It is
$$ k_S((s_1, p_1), (s_2, p_2)) = \frac{1}{(1 - p_1 \bar{p}_2)^2 - (s_1 - \bar{s}_2 p_1)(\bar{s}_2 - s_1 \bar{p}_2)}.$$
This has the property that $\langle f , k_S(s,p) \rangle = f(s,p)$ for any $f \in H^2(\mathbb G)$ and $(s,p) \in \mathbb G$.

\begin{defn}

The kernel $k_S$ obtained above, i.e., the reproducing kernel for the Hardy space of the symmetrized bidisk is called the Szego kernel of the symmetrized  bidisk \end{defn}

\subsection{The multipliers}
A holomorphic function $\varphi : \mathbb G \rightarrow \mathbb C$ is called a multiplier on $H^2(\mathbb G)$ if the liner transformation $M_\varphi$ on $H^2(\mathbb G)$ defined by
$$ (M_\varphi f)((s,p)) = \varphi((s,p)) f((s,p)), \mbox{ for } (s,p) \in \mathbb G $$
is a bounded linear operator on $H^2(\mathbb G)$. Clearly, the set of all multipliers form an algebra, called the multiplier algebra of $H^2(\mathbb G)$ and denoted by $\clm(H^2(\mathbb G)$.

\begin{lem}
The multiplier algebra $\clm(H^2(\mathbb
G)$ is isometrically isomorphic with $H^\infty(\mathbb G)$. \end{lem}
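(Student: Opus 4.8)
The plan is to establish the two inclusions $\clm(H^2(\mathbb G)) \subseteq H^\infty(\mathbb G)$ and $H^\infty(\mathbb G) \subseteq \clm(H^2(\mathbb G))$, in each case tracking the norm. For the first inclusion, suppose $\varphi$ is a multiplier. Then $M_\varphi$ is bounded and a direct computation on the reproducing kernels shows that $k_S(\cdot, (s,p))$ is an eigenvector of $M_\varphi^*$ with eigenvalue $\overline{\varphi(s,p)}$: indeed $\la M_\varphi f, k_S(\cdot,(s,p))\ra = \varphi(s,p) f(s,p) = \la f, \overline{\varphi(s,p)} k_S(\cdot,(s,p))\ra$ for all $f$. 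Hence $|\varphi(s,p)| = \|M_\varphi^* k_S(\cdot,(s,p))\| / \|k_S(\cdot,(s,p))\| \le \|M_\varphi^*\| = \|M_\varphi\|$, so $\varphi$ is bounded with $\|\varphi\|_\infty \le \|M_\varphi\|$. (That $\varphi$ is holomorphic is already built into the definition of multiplier in the excerpt.)

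For the reverse inclusion, the cleanest route is to pull everything back to the bidisk via the isometric isomorphism $\tau_1 : H^2(\mathbb G) \to H^2_{\rm anti}(\mathbb D^2)$ of the Lemma above. Given $\varphi \in H^\infty(\mathbb G)$, the function $\varphi \circ \pi$ is a bounded holomorphic function on $\mathbb D^2$ with $\|\varphi\circ\pi\|_{\infty} = \|\varphi\|_{\infty}$ (since $\pi$ is onto $\mathbb G$), so multiplication by $\varphi\circ\pi$ is a bounded operator on $H^2(\mathbb D^2)$ of norm $\|\varphi\|_\infty$. Moreover $\varphi\circ\pi$ is symmetric in $(z_1,z_2)$, so multiplication by it maps antisymmetric functions to antisymmetric functions, i.e.\ it restricts to a bounded operator on $H^2_{\rm anti}(\mathbb D^2)$. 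Transporting back through $\tau_1$ and using $\tau_1(\varphi f) = (\varphi\circ\pi)\cdot\tau_1(f)$ (which holds because $J(\varphi f)\circ\pi = (\varphi\circ\pi)\,J(f\circ\pi)$), we get that $M_\varphi$ is bounded on $H^2(\mathbb G)$ with $\|M_\varphi\| \le \|\varphi\|_\infty$. Combined with the previous paragraph this gives $\|M_\varphi\| = \|\varphi\|_\infty$, so the map $\varphi \mapsto M_\varphi$ is an isometry; it is obviously an algebra homomorphism, and it is onto $\clm(H^2(\mathbb G))$ by definition and injective since $M_\varphi 1 = \varphi$.

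The only genuinely delicate point is the claim that $\varphi\circ\pi \in H^\infty(\mathbb D^2)$ with the \emph{same} sup norm — one must check that a bounded holomorphic function on $\mathbb G$ composed with the proper holomorphic map $\pi$ stays bounded holomorphic on $\mathbb D^2$ and that, conversely, a bounded \emph{symmetric} holomorphic function on $\mathbb D^2$ descends to a bounded holomorphic function on $\mathbb G$ with no loss of norm (surjectivity of $\pi$ onto $\mathbb G$ handles one direction; for the other, boundedness and symmetry let one write it as $\varphi\circ\pi$ for a holomorphic $\varphi$ on $\mathbb G$, and $\sup_{\mathbb G}|\varphi| = \sup_{\mathbb D^2}|\varphi\circ\pi|$ because $\pi$ is surjective). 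I expect this identification of the two $H^\infty$ algebras to be the main obstacle, or at least the step requiring the most care; everything else is formal once the isomorphism $\tau_1$ and the reproducing-kernel eigenvector computation are in hand.
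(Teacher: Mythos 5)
Your proof is correct. The first inclusion (every multiplier lies in $H^\infty(\mathbb G)$ with $\|\varphi\|_\infty\le\|M_\varphi\|$) is exactly what the paper dispatches by citing inequality (2.33) of \cite{AMc-book}; your reproducing-kernel eigenvalue computation is precisely the standard argument behind that citation, so there is no real difference in content. For the reverse inequality $\|M_\varphi\|\le\|\varphi\|_\infty$ the two proofs diverge in implementation: the paper works directly from Definition \ref{GHardy}, bounding the exhaustion integral $\int_{\mathbb T^2}|\varphi\circ\pi|^2|f\circ\pi|^2|J|^2\,d\theta_1 d\theta_2$ by $\|\varphi\|_\infty^2\int_{\mathbb T^2}|f\circ\pi|^2|J|^2$, whereas you transport the problem to $H^2(\mathbb D^2)$ via the unitary $\tau_1$, use the intertwining $\tau_1 M_\varphi=M_{\varphi\circ\pi}\tau_1$ together with the known identity $\clm(H^2(\mathbb D^2))=H^\infty(\mathbb D^2)$, and then restrict to the invariant subspace $H^2_{\rm anti}(\mathbb D^2)$. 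Since $H^2(\mathbb G)$ is itself defined through the bidisk integral and $\tau_1$ is a unitary, the two arguments are ultimately the same estimate packaged differently; yours is a bit more structural (it exhibits $M_\varphi$ as a compression of a bidisk multiplication operator, which also explains \emph{why} the bound holds), while the paper's is more elementary and self-contained.

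One small remark: the point you flag as ``genuinely delicate'' is not an issue here. The inclusion $H^\infty(\mathbb G)\subseteq\clm(H^2(\mathbb G))$ only needs the trivial forward direction, that $\varphi\in H^\infty(\mathbb G)$ gives $\varphi\circ\pi$ holomorphic and bounded on $\mathbb D^2$ with $\sup_{\mathbb D^2}|\varphi\circ\pi|=\sup_{\mathbb G}|\varphi|$ (the equality coming from surjectivity of $\pi$). The converse descent problem --- that a bounded symmetric holomorphic function on $\mathbb D^2$ comes from a bounded holomorphic function on $\mathbb G$ --- is never used in this lemma, so you can safely drop that worry.
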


\begin{proof} From general theory of reproducing kernel Hilbert spaces and multipliers on them, it is well-known that a multiplier $\varphi$ belongs to $H^\infty(\mathbb G)$, the algebra of bounded holomorphic functions on $\mathbb G$. Moreover, if $\| \varphi \|_\infty$ denotes the $H^\infty$-norm
$$ \| \varphi \|_\infty \bydef \sup_{(s,p) \in \mathbb G} |\varphi(s,p)|$$
then $\|M_\varphi \| \ge \| \varphi \|_\infty$, see inequality (2.33) in \cite{AMc-book} for example. In the particular case of the symmetrized bidisk, it is also true that $H^\infty(\mathbb G) \subseteq \clm(H^2(\mathbb G)$ and $\|M_\varphi \| = \| \varphi \|_\infty$. Indeed, for $f \in H^2(\mathbb G)$ and $\varphi \in H^\infty (\mathbb G)$ and $r \in (0,1)$, we have
\begin{eqnarray*}
& & \int_{\mathbb T \times \mathbb T} |\varphi \circ \pi (r\,e^{i\theta_1}, r\,e^{i\theta_2})|^2 |f\circ \pi (r\,e^{i\theta_1}, r\,e^{i\theta_2})|^2
|J(r\,e^{i\theta_1}, r\,e^{i\theta_2})|^2 d\theta_1 d\theta_2 \\
& \le & \| \varphi \|_\infty \int_{\mathbb T \times \mathbb T}
|f\circ \pi (r\,e^{i\theta_1}, r\,e^{i\theta_2})|^2
|J(r\,e^{i\theta_1}, r\,e^{i\theta_2})|^2 d\theta_1 d\theta_2
\end{eqnarray*} which means that $\| M_\varphi f \| \le \| \varphi
\|_\infty \| f \|$. \end{proof}

Multiplication by the co-ordinate functions are often special. On $H^2(\mathbb D^2)$, they are $M_{z_1}$ and $M_{z_2}$. On $H^2(\mathbb G)$, they are $M_{s}$ and $M_{p}$.

\begin{lem}
Szego kernel of the symmetrized bidisk is an admissible kernel. \end{lem}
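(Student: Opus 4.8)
The plan is to transport the question to the bidisk, where Ando's inequality is available, via the identification established above. By Definition \ref{Gamma-contraction} it suffices to check that $\|f(M_s,M_p)\|\le\sup_{\mathbb G}|f|$ for every polynomial $f$ in two variables. Recall from the Lemma identifying $H^2(\mathbb G)$ with $H^2_{\rm anti}(\mathbb D^2)$ that $\tau_1\colon H^2(\mathbb G)\to H^2_{\rm anti}(\mathbb D^2)$, $f\mapsto J\,(f\circ\pi)$, is unitary. Since $s\circ\pi(z_1,z_2)=z_1+z_2$ and $p\circ\pi(z_1,z_2)=z_1z_2$ are symmetric, multiplying an anti-symmetric function by either of them keeps it anti-symmetric; hence $H^2_{\rm anti}(\mathbb D^2)$ is invariant under both $M_{z_1+z_2}$ and $M_{z_1z_2}$ on $H^2(\mathbb D^2)$, and a direct computation with the formula for $\tau_1$ gives
\[
\tau_1 M_s \tau_1^{-1}=M_{z_1+z_2}\big|_{H^2_{\rm anti}(\mathbb D^2)},\qquad
\tau_1 M_p \tau_1^{-1}=M_{z_1z_2}\big|_{H^2_{\rm anti}(\mathbb D^2)} .
\]

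Next I would show that $(M_{z_1+z_2},M_{z_1z_2})$ is a $\Gamma$-contraction on all of $H^2(\mathbb D^2)$. Indeed $(M_{z_1},M_{z_2})$ is a commuting pair of isometries on $H^2(\mathbb D^2)$, so Ando's inequality applies: $\|g(M_{z_1},M_{z_2})\|\le\sup_{(z_1,z_2)\in\mathbb D^2}|g(z_1,z_2)|$ for every polynomial $g$. Given a polynomial $f$ in two variables, apply this to $g=f\circ\pi$. Because $f(M_{z_1+z_2},M_{z_1z_2})=(f\circ\pi)(M_{z_1},M_{z_2})$ and $\pi(\mathbb D^2)=\mathbb G$, we obtain
\[
\|f(M_{z_1+z_2},M_{z_1z_2})\|\le\sup_{\mathbb D^2}|f\circ\pi|=\sup_{\mathbb G}|f|,
\]
which is exactly the $\Gamma$-contraction inequality for $(M_{z_1+z_2},M_{z_1z_2})$.

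Finally, the restriction of a $\Gamma$-contraction to a jointly invariant subspace is again a $\Gamma$-contraction: for a polynomial $f$, the operator $f(M_s,M_p)$ is unitarily equivalent to $f(M_{z_1+z_2},M_{z_1z_2})\big|_{H^2_{\rm anti}(\mathbb D^2)}$, whose norm is at most $\|f(M_{z_1+z_2},M_{z_1z_2})\|\le\sup_{\mathbb G}|f|$. Hence $(M_s,M_p)$ is a $\Gamma$-contraction on $H^2(\mathbb G)$, i.e.\ $k_S$ is admissible. I do not expect a serious obstacle here; the only points needing care are the invariance of $H^2_{\rm anti}(\mathbb D^2)$ under the two multiplications (immediate from ``symmetric times anti-symmetric is anti-symmetric'' together with the boundedness of $M_{z_1+z_2}$ and $M_{z_1z_2}$ on $H^2(\mathbb D^2)$) and the bookkeeping identifying the polynomial functional calculus of $(M_{z_1+z_2},M_{z_1z_2})$ with that of $(M_{z_1},M_{z_2})$ precomposed with $\pi$.
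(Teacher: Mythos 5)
Your proof is correct and follows the same skeleton as the paper's: transport the question to $H^2_{\rm anti}(\mathbb D^2)$ via the unitary $\tau_1$, observe that this subspace is jointly invariant for $(M_{z_1+z_2},M_{z_1z_2})$ on $H^2(\mathbb D^2)$, and conclude by restriction. The one genuine divergence lies in how you certify that $(M_{z_1+z_2},M_{z_1z_2})$ is a $\Gamma$-contraction. The paper invokes the stronger fact from \cite{BPSR} that the symmetrization of a commuting pair of isometries is a $\Gamma$-\emph{isometry}, and that the restriction of a $\Gamma$-isometry to an invariant subspace is again a $\Gamma$-isometry; it thus obtains more than admissibility requires, namely that $(M_s,M_p)$ is a $\Gamma$-isometry on $H^2(\mathbb G)$. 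You instead apply Ando's inequality directly to the pair of commuting isometries $(M_{z_1},M_{z_2})$, identify $f(M_{z_1+z_2},M_{z_1z_2})$ with $(f\circ\pi)(M_{z_1},M_{z_2})$ for a polynomial $f$, and use $\sup_{\mathbb D^2}|f\circ\pi|=\sup_{\mathbb G}|f|$; you then need only the elementary observation that a $\Gamma$-contraction restricts to a $\Gamma$-contraction on a jointly invariant subspace, which follows immediately from Definition \ref{Gamma-contraction}. Your route is more self-contained, relying only on Ando's inequality already quoted in the Introduction, at the small cost of establishing $\Gamma$-contractivity rather than the finer $\Gamma$-isometry property that the paper records. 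Both arguments are valid and both hinge on exactly the same invariance of $H^2_{\rm anti}(\mathbb D^2)$ and the same intertwining $\tau_1 M_s = M_{z_1+z_2}\tau_1$, $\tau_1 M_p = M_{z_1 z_2}\tau_1$.
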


\begin{proof}
We shall need to show that the operator pair $(M_s, M_p)$ on
$H^2(\mathbb G)$ is a $\Gamma$-contraction. We shall in fact show
that it is a $\Gamma$-isometry by showing that it is unitarily
equivalent to a $\Gamma$-isometry.

The first thing to note is that the symmetrization of a pair of commuting isometries is a $\Gamma$-isometry, see \cite{BPSR}. Thus $(M_{z_1} + M_{z_2}, M_{z_1} M_{z_2})$ is a $\Gamma$-isometry on $H^2 (\mathbb D^2)$, Secondly, the subspace $H^2_{\rm anti}(\mathbb D^2)$ is
invariant under this $\Gamma$-isometry. Restriction of a $\Gamma$-isometry to an invariant subspace  is a $\Gamma$-isometry, see \cite{BPSR}.  Thus the commuting pair $$((M_{z_1} + M_{z_2})|_{H^2_{\rm
anti}(\mathbb D^2)} , (M_{z_1} M_{z_2})|_{H^2_{\rm anti}(\mathbb
D^2)})$$ is a $\Gamma$-isometry.
We shall show unitary equivalence of the operator pair $(M_s, M_p)$
with the pair $$( (M_{z_1} + M_{z_2})|_{H^2_{\rm anti}(\mathbb
D^2)}, (M_{z_1} M_{z_2})|_{H^2_{\rm anti}(\mathbb D^2)}).$$ We
note that by definition of the unitary $\tau_1$, we have that for any
$f \in H^2(\mathbb G)$,
$$((\tau_1 M_s)f) (z_1, z_2) = \| J \|^{-1} J(z_1, z_2) (z_1 +
z_2) f(\pi(z_1, z_2))$$ and
$$(M_{z_1 + z_2} \tau_1f)(z_1, z_2) = (z_1 + z_2)\| J \|^{-1} J(z_1, z_2) f(\pi(z_1, z_2)).$$
Thus $\tau_1 M_s = M_{z_1 + z_2} \tau_1$. Similarly, for $M_p$ and
$(M_{z_1} M_{z_2})|_{H^2_{\rm anti}(\mathbb D^2)}$. That completes
the proof.
\end{proof}

\section{Proof of the Realization Theorem}

This is the largest section of this paper because the proof of the Realization Theorem will use various concepts. The first step of \textbf{(H)} implying \textbf{(M)} is accomplished in the following lemma. The ideas and the arguments in the proof of this lemma are adapted from \cite{ColeWermer} where the authors proved a similar lemma for the disk algebra.

\begin{lem} If $f$ is a function in $H^\infty(\mathbb G)$ with $\| f \|_\infty \le 1$, then
        $$ (1 - f(s,p) \overline{f(t,q)}) k( (s,p) , (t,q)) $$ is a weak kernel for every admissible kernel $k$.
\label{HimpliesM}
\end{lem}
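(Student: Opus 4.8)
The plan is to exploit the standard reproducing-kernel dictionary: for a scalar function $\varphi$ on $\mathbb G$, the function $\big(1-\varphi(s,p)\overline{\varphi(t,q)}\big)k((s,p),(t,q))$ is a weak kernel on $\mathbb G\times\mathbb G$ if and only if the assignment $k(\cdot,\lambda)\mapsto\overline{\varphi(\lambda)}\,k(\cdot,\lambda)$ extends to a contraction on $H_k$; this is seen by testing the inequality $M_\varphi M_\varphi^*\le I$ against finite sums $\sum_i c_i\,k(\cdot,\lambda_i)$ and using $M_\varphi^*k(\cdot,\lambda)=\overline{\varphi(\lambda)}\,k(\cdot,\lambda)$. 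For a \emph{polynomial} $\varphi$ with $\sup_{\mathbb G}|\varphi|\le1$, the operator $M_\varphi$ is bounded on $H_k$ and equals $\varphi(M_s,M_p)$, so the required contractivity $\|M_\varphi\|\le1$ is exactly the polynomial von~Neumann inequality furnished by admissibility ($(M_s,M_p)$ is a $\Gamma$-contraction, hence $\|\varphi(M_s,M_p)\|\le\sup_{\mathbb G}|\varphi|$). So the whole game is to approximate $f$ by such polynomials and pass to the limit on the kernel side; the catch is that polynomial approximation of $H^\infty(\mathbb G)$-functions is not free.

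First I would reduce to the case that $f$ is holomorphic in a neighbourhood of $\Gamma=\overline{\mathbb G}$. The point is that $\mathbb G$, being the symmetrization of the bidisc, inherits the contractive dilations of the bidisc: the map $m_r(s,p)=(rs,r^2p)$, $0<r<1$, is $(z_1,z_2)\mapsto(rz_1,rz_2)$ at the level of the bidisc, so $m_r(\mathbb G)\subseteq\mathbb G$ and $m_r(\overline{\mathbb G})\subseteq\mathbb G$. Hence $f_r:=f\circ m_r$ is holomorphic on an open set containing the compact polynomially convex set $\Gamma$, still satisfies $\sup_{\mathbb G}|f_r|\le\sup_{\mathbb G}|f|\le1$, and $f_r\to f$ pointwise on $\mathbb G$ as $r\to1^-$. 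Since a pointwise limit of positive semidefinite kernels is positive semidefinite, it is enough to prove the lemma for each $f_r$; so from now on assume $f$ is holomorphic near $\Gamma$ with $\sup_\Gamma|f|\le1$.

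Now the Oka--Weil theorem (applicable precisely because $\Gamma$ is compact and polynomially convex and $f$ is holomorphic on a neighbourhood of it) produces polynomials $g_n$ with $g_n\to f$ uniformly on $\Gamma$; dividing by $\max\{1,\sup_\Gamma|g_n|\}$ we may assume $\sup_{\mathbb G}|g_n|=\sup_\Gamma|g_n|\le1$ while still $g_n\to f$ uniformly on $\Gamma\supseteq\mathbb G$. Fix an admissible $k$. By the first paragraph, $\big(1-g_n(s,p)\overline{g_n(t,q)}\big)k((s,p),(t,q))$ is a weak kernel for every $n$. Letting $n\to\infty$, $g_n(s,p)\to f(s,p)$ for every $(s,p)\in\mathbb G$, so these kernels converge pointwise to $\big(1-f(s,p)\overline{f(t,q)}\big)k((s,p),(t,q))$, which is therefore a weak kernel; together with the reduction of the second paragraph this proves the lemma.

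The only genuinely delicate step is the second paragraph. A typical element of the unit ball of $H^\infty(\mathbb G)$ is not a uniform limit of polynomials on $\mathbb G$ (it need not even extend continuously to $\Gamma$), so Oka--Weil cannot be invoked directly; it is the contractive scaling $m_r$ — which is special to $\mathbb G$ and would not be available for a general domain of holomorphy — that lets one first push $f$ strictly inside and then approximate. This is exactly the adjustment needed to lift the Cole--Wermer disk-algebra argument to $H^\infty(\mathbb G)$, the remaining links ("polynomial von~Neumann inequality $\Rightarrow$ contractive multiplier $\Rightarrow$ positive kernel") being routine.
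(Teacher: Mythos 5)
Your proof is correct and follows essentially the same route as the paper's: radial scaling $m_r(s,p)=(rs,r^2p)$ to reduce to functions holomorphic past $\Gamma$, Oka--Weil to reduce to polynomials, the polynomial von~Neumann inequality for the $\Gamma$-contraction coming from admissibility, and a passage to the limit via closedness of positive semidefiniteness. The only organizational difference is that you argue directly with the multiplier norm $\|M_\varphi\|$ on all of $H_k$, whereas the paper compresses to the finite-dimensional span of kernel functions and checks contractivity of $\check f(T_1^*,T_2^*)$ there, and it phrases the scaling step as the equality of the Pick body $\mathcal D(\lambda)$ with the corresponding $A(\Gamma)$-interpolation set $\mathcal D$ — but these are the same ideas in slightly different clothing.
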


\begin{proof} Let $n$ be a positive integer and let $\lambda_1 = (s_1, p_1), \lambda_2 = (s_2,p_2), \ldots , \lambda_n=(s_n,p_n)$ be $n$ points in $\mathbb G$. We shall need to show that the $n \times n$ matrix
$$ \big( (1 - f(s_i,p_i) \overline{f(s_j,p_j)}) k( (s_i,p_i) , (s_j,p_j)) $$
is a positive semi-definite for every admissible kernel $k$. Let $w_i = f(\lambda_i)$. Let $w = (w_1, w_2, \ldots ,w_n) $. We define two interpolation sets. The first one is the Pick body.
$$ \mathcal D(\lambda) = \{ w : \mbox{ there is } f \in H^\infty (\mathbb G) \mbox{ with } f(\lambda_j) = w_j \mbox{ for } 1 \le j \le n \mbox{ and } \| f \|_\infty \le 1\}.$$
The other is the interpolation set associated with the algebra $A(\Gamma$).
$$ \mathcal D = \{ w : \mbox{ for all } \epsilon > 0, \mbox{ there is } f \in A(\Gamma) \mbox{ with } f(\lambda_j) = w_j \mbox{ for } 1 \le j \le n \mbox{ and } \| f \| \le 1 + \epsilon \}.$$

Note that $$\mathcal D = \cap_{\epsilon > 0} A_\epsilon,$$ where $$A_\epsilon = \{ w : \mbox{ there is } f \in A(\Gamma) \mbox{ with } f(\lambda_j) = w_j \mbox{ for } 1 \le j \le n \mbox{ and } \| f \| \le 1 + \epsilon \}.$$ The linear map $L : A(\Gamma) \rightarrow \mathbb C^n$ sending $f \rightarrow (f(\lambda_1), f(\lambda_2), \ldots , f(\lambda_n))$ is a continuous surjection and hence is an open map proving that the complement of $A_\epsilon$ is open. Thus $\mathcal D$ is closed.

The two sets $\mathcal D(\lambda)$ and $\mathcal D$ are same. Indeed, if $w \in \mathcal D(\lambda)$, then there is an $f \in H^\infty (\mathbb G) \mbox{ with } f(\lambda_j) = w_j \mbox{ for } 1 \le j \le n \mbox{ and } \| f \|_\infty \le 1$. For each $r \in (0,1)$, the function $f_r(s,p) = f(rs, r^2p)$ is in the unit ball of the algebra $A(\Gamma)$.
So the points $\{ (f(r\lambda_1), f(r\lambda_2), \ldots , f(r\lambda_n)) \}_{0 < r < 1}$ are all in $\mathcal D$. Since $\mathcal D$ is closed and these points converge to $w$ as $r \rightarrow 1$, we have $w \in \mathcal D$.

Conversely, if $w \in \mathcal D$, then for every $m \ge 1$, there is a function $f_m$ in $A(\Gamma)$ such that $f_m(\lambda_j) = w_j$ for $1 \le j \le n$ and $\| f_m \| \le 1 + \frac{1}{m}$. Now a normal family argument implies that there is a subsequence $\{ m_k\}_{k \ge 1}$ such that $\{ f_{m_k} \}$ converges uniformly on compact subsets and hence $f = \lim f_{m_k}$ is holomorphic in $\mathbb G$. It has norm no greater than one. Thus $w \in \mathcal D$.

Given that there is a function $f$ in $H^\infty(\mathbb G)$ with $\| f \|_\infty \le 1$ and satisfying $f(\lambda_i) = w_i, i=1,2, \ldots ,n$, we want to show that the $n\times n$ matrix in (\ref{Pick-condition}) is positive definite. By what we did above, we can assume $f$ to be in $A(\Gamma)$. First suppose $f$ is a polynomial. The general case will be dealt
with in a short while using a sequence of polynomials. Let
$$f (s,p) = \sum_{k,l = 1}^N a_{kl} s^k p^l.$$
Let $\check{f}$ be another polynomial
$$\check{f} (s,p) = \sum_{k,l = 1}^N \overline{a}_{kl} s^k p^l.$$

Let $k$ be an admissible kernel. Then $(M_s, M_p)$ forms a
$\Gamma$-contraction on $H_k$. Let $\lambda_i = (s_i , p_i)$ for
$i=1,2, \ldots , n$. Let $k_j$ be the kernel function $$ k_j(z) =
k(z , \lambda_j) \mbox{ for } j=1,2, \ldots ,n.$$ Let $\mathcal L$
be the $n$-dimensional space spanned by $k_1, k_2, \ldots ,k_n$.
Define operators $T_1$ and $T_2$ on $\mathcal L$ by
$$ T_1^* k_j = \overline{s}_j k_j \mbox{ and } T_2^* k_j = \overline{p}_j
k_j.$$ It is straightforward that $\mathcal L$ is an invariant subspace for the $\Gamma$-contraction $(M_s^*, M_p^*)$ and
$$ M_s^*|_{\mathcal L} = T_1^* \mbox{ and } M_p^*|_{\mathcal L} =
T_2^*.$$ Thus $(T_1^*, T_2^*)$ is a $\Gamma$-contraction. So $\|
\check{f}(T_1^*, T_2^*) \| \le 1$. Now, note that
$$\check{f}(T_1^*, T_2^*) k_j = \sum_{k,l = 1}^N \overline{a}_{kl}
\overline{s_j}^k \overline{p_j}^l k_j = \overline{f(\lambda_j)}
k_j = \overline{w}_j k_j.$$ Now, a straightforward computation
shows that contractivity of $\check{f}(T_1^*, T_2^*)$ is
equivalent to the matrix (\ref{Pick-condition}) being positive
definite.

For a general $f$ in $A(\Gamma)$, we can get a
sequence of polymonials $\{ p_m \}$ that converges uniformly over
$\Gamma$ to $f$ by Oka-Weil theorem. What we proved above tells us
that for every $m$, the matrix
$$ \left( \left( \; \; (1 - p_m(\lambda_i)
\overline{p_m(\lambda_j)}) k(\lambda_i , \lambda_j) \; \; \right)
\right)$$ is positive definite. This matrix converges to the
matrix in (\ref{Pick-condition}) as $m$ tends to infinity. Since
the set of positive definite matrices is closed, we are done.
\end{proof}

   There is an elegant characterization of a $\Gamma$-contraction. Agler and Young showed that a pair $(S,P)$ of commuting bounded operators is a $\Gamma$-contraction (Definition \ref{Gamma-contraction}) if and only if $(2\alpha P - S) (2 - \alpha S)^{-1} $ is a contraction for every $\alpha \in \mathbb D$, see Theorem 1.5 in \cite{ay-jot}. Applying this criterion to reproducing kernel Hilbert spaces, we get the following.

\begin{lem} \label{admi}
A kernel $k$ on $\mathbb G$ is admissible if and only if
$$ (1 - \varphi(\alpha, s, p) \overline{\varphi(\alpha, t, q)}) k\big( (s,p) , (t,q) \big)$$
is a weak kernel for every $\alpha \in \overline{\mathbb D}$. \end{lem}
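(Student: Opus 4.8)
The statement to be proved is Lemma~\ref{admi}: a kernel $k$ on $\mathbb G$ is admissible precisely when $(1-\varphi(\alpha,s,p)\overline{\varphi(\alpha,t,q)})k((s,p),(t,q))$ is a weak kernel for every $\alpha\in\overline{\mathbb D}$. The plan is to combine two ingredients: (i) the Agler--Young characterization of $\Gamma$-contractions quoted just before the lemma, namely that $(S,P)$ is a $\Gamma$-contraction iff $(2\alpha P-S)(2-\alpha S)^{-1}$ is a contraction for every $\alpha\in\mathbb D$; and (ii) the standard dictionary on a reproducing kernel Hilbert space $H_k$ translating "an operator built from $M_s,M_p$ is a contraction" into "a certain kernel is a weak (positive semi-definite) kernel". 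By Definition~\ref{admissibility}, admissibility of $k$ means exactly that $(M_s,M_p)$ is a $\Gamma$-contraction on $H_k$, so by (i) this is equivalent to $\varphi(\alpha,M_s,M_p)=(2\alpha M_p-M_s)(2-\alpha M_s)^{-1}$ being a contraction for every $\alpha\in\mathbb D$.

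Next I would unwind what this operator is on $H_k$. Since $M_s$ is multiplication by the coordinate $s$ and $|s|<2$ on $\mathbb G$, the operator $2-\alpha M_s$ is invertible, and for any multiplier $\psi$ one has $M_\psi^* k(\cdot,w)=\overline{\psi(w)}\,k(\cdot,w)$; hence $\varphi(\alpha,M_s,M_p)=M_{\varphi(\alpha,\cdot)}$, multiplication by the bounded holomorphic function $\varphi(\alpha,\cdot)$ on $\mathbb G$. (One should note in passing that $\varphi(\alpha,\cdot)$ is indeed a multiplier of $H_k$ whenever $k$ is admissible, but in fact the identity $\varphi(\alpha,M_s,M_p)=M_{\varphi(\alpha,\cdot)}$ holds as soon as $(M_s,M_p)$ is bounded, which is part of what being a kernel with bounded coordinate multipliers gives us.) Then I invoke the classical fact that, for a multiplier $\psi$ on a reproducing kernel Hilbert space $H_k$, the multiplication operator $M_\psi$ is a contraction if and only if $(1-\psi(z)\overline{\psi(w)})k(z,w)$ is a positive semi-definite kernel; the proof is the usual computation testing $\|M_\psi^*\|\le1$ against finite linear combinations $\sum_j c_j k(\cdot,w_j)$ of kernel functions, exactly as done inside the proof of Lemma~\ref{HimpliesM}. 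Applying this with $\psi=\varphi(\alpha,\cdot)$ gives: $M_{\varphi(\alpha,\cdot)}$ is a contraction iff $(1-\varphi(\alpha,s,p)\overline{\varphi(\alpha,t,q)})k((s,p),(t,q))$ is a weak kernel. Stringing the equivalences together over all $\alpha\in\mathbb D$ yields the lemma, except that the statement quantifies over $\alpha\in\overline{\mathbb D}$.

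That last point—passing from $\mathbb D$ to $\overline{\mathbb D}$—is the one genuinely nontrivial step, and it is where I expect the main (if modest) obstacle to lie. In the forward direction it is free: if the kernel is a weak kernel for all $\alpha\in\overline{\mathbb D}$ then in particular for all $\alpha\in\mathbb D$. For the converse one needs that weak-positivity of $(1-\varphi(\alpha,s,p)\overline{\varphi(\alpha,t,q)})k$ for all $\alpha\in\mathbb D$ forces the same for $\alpha$ on the unit circle. This follows by a limiting argument: for fixed points $(s_1,p_1),\dots,(s_n,p_n)$ in $\mathbb G$ and $\alpha_0\in\mathbb T$, the scalar entries $\varphi(\alpha,s_i,p_i)$ depend continuously on $\alpha$ on all of $\overline{\mathbb D}$ (since $2-\alpha s_i\ne0$ there, as $|s_i|<2$), so the $n\times n$ matrix $((1-\varphi(\alpha,s_i,p_i)\overline{\varphi(\alpha,s_j,p_j)})k(\lambda_i,\lambda_j))$ is continuous in $\alpha$; taking $\alpha\to\alpha_0$ radially through $\mathbb D$ and using that the cone of positive semi-definite matrices is closed gives positivity at $\alpha_0$. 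I would record this continuity-and-closedness remark explicitly, as it is the only place the closed disk (rather than the open disk of the Agler--Young criterion) enters.
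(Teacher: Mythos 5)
Your plan follows the same route as the paper: unwind Definition~\ref{admissibility} to ``$(M_s,M_p)$ is a $\Gamma$-contraction'', invoke the Agler--Young characterization (Theorem~1.5(v) of \cite{ay-jot}) to pass to ``$(2\alpha M_p-M_s)(2-\alpha M_s)^{-1}$ is a contraction for all $\alpha\in\mathbb D$'', use the standard reproducing kernel identity $M_\psi^*k(\cdot,w)=\overline{\psi(w)}k(\cdot,w)$ to convert that into the positivity of $(1-\varphi(\alpha,s,p)\overline{\varphi(\alpha,t,q)})k$, and then pass from $\mathbb D$ to $\overline{\mathbb D}$ by continuity of $\alpha\mapsto\varphi(\alpha,s,p)$ together with closedness of the cone of positive semi-definite matrices. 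Your treatment of that last step is fine and, if anything, spelled out more carefully than the one-line remark in the paper.

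There is, however, a gap in the converse direction that you pass over. To invoke the Agler--Young criterion you need $(M_s,M_p)$ to be a pair of \emph{bounded} commuting operators on $H_k$; admissibility is a statement about $\Gamma$-contractions, which are by definition bounded. A kernel $k$ in the sense of Definition~\ref{kernel} carries no a priori guarantee that multiplication by $s$ or by $p$ maps $H_k$ boundedly into itself, so the chain of ``equivalences'' you string together is only a chain of implications until boundedness is established. Your parenthetical remark (``which is part of what being a kernel with bounded coordinate multipliers gives us'') is circular --- it assumes the very boundedness that needs to be proved. The paper closes this gap explicitly: set $\alpha=0$ in the hypothesis to get that $(1-\tfrac{s}{2}\tfrac{\bar t}{2})k$ is a weak kernel, hence $M_s$ is bounded with $\|M_s\|\le 2$ (and then $2-\alpha M_s$ is invertible for $|\alpha|<1$ by a Neumann series); then set $\alpha=1$ and $\alpha=-1$ and combine to conclude $M_p$ is bounded. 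Only after this can one legitimately reverse the implications via the Agler--Young characterization. You should add this step before asserting the equivalence.
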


\begin{proof} Let $k$ be a kernel on $\mathbb G \times \mathbb G$ and let $H_k$ be the corresponding reproducing kernel Hilbert space.

The kernel $k$ is admissible \\
$\Rightarrow$ The operator pair $(M_s, M_p)$ is a $\Gamma$-contraction on $H_k$ \\
$\Rightarrow$ The operator  $(2 \alpha M_p - M_s)(2 - \alpha M_s)^{-1}$ is a contraction for every $\alpha \in \mathbb D$ (by part (v) of Theorem 1.5 of \cite{ay-jot}) \\
$\Rightarrow$ $ (1 - \varphi(\alpha, s, p) \overline{\varphi(\alpha, t, q)}) k\big( (s,p) , (t,q) \big)$
is a weak kernel for every $\alpha \in {\mathbb D}$ (by using the fact that $M_\psi^* k ( \cdot , (t,q)) = \overline{\psi(t,q)} k ( \cdot , (t,q))$ for every multiplication operator $M_\psi$ on the space $H_k$).

We can extend the result to all of $\Dbar$ because $\varphi$ is a continuous function of $\alpha$. Conversely, if we know that $ (1 - \varphi(\alpha, s, p) \overline{\varphi(\alpha, t, q)}) k\big( (s,p) , (t,q) \big)$
is a weak kernel for every $\alpha \in \overline{\mathbb D}$, then by putting $\alpha = 0$, we obtain that $M_s$ is a bounded operator. By putting $\alpha = 1$ and $\alpha = -1$ and adding, we get that $M_p$ is a bounded operator. Now, all the steps in the above implications can be reversed because part (v) of Theorem 1.5 of \cite{ay-jot} is a characterization.
 \end{proof}

Taking cue from the lemma above, it is now natural to make the following definition.

\begin{defn} \label{AdmissibilityonY}
A weak kernel $k$ on a subset $Y$ of $\mathbb G$ is called admissible if
$(1 - \varphi(\alpha, s, p) \overline{\varphi(\alpha, t, q)}) k\big( (s,p) , (t,q) \big)$
is a weak kernel on $Y \times Y$ for every $\alpha \in \overline{\mathbb D}$. \end{defn}

The following lemma decomposes $ 1 - f(s,p) \overline{f(t,q)}$ and hence accomplishes the step of \textbf{(M)} implying \textbf{(D)} of the Realization Theorem.

\begin{lem}[Decomposition]\label{MimpliesD}
If $ (1 - f(s,p) \overline{f(t,q)}) k( (s,p) , (t,q)) $ is a weak kernel for every admissible kernel $k$, then there is a positive semi definite kernel  $\Delta$ defined on $\mathbb G \times \mathbb G$ and taking values in $C(\Dbar)^*$ such that
        $$ 1 - f(s,p) \overline{f(t,q)} = \Delta\big( ( s, p) , (t, q) \big) \left( 1 - \varphi(\cdot , s , p) \overline{\varphi(\cdot , t, q)} \right).$$ \end{lem}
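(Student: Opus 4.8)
The plan is to run Agler's standard Hahn--Banach separation argument, using the cone generated by the admissible kernels. First I would fix the $n$ points $\lambda_1,\dots,\lambda_n$ in $\mathbb G$ (it suffices to produce a $\Delta$ on each finite subset and then patch, or — more cleanly — to argue directly with all of $\mathbb G$ by working in an appropriate locally convex space; I expect the authors to localize to finite sets and invoke a compactness/inverse-limit step, so I would do the same). On the finite set $F = \{\lambda_1,\dots,\lambda_n\}$, consider the real vector space $S$ of self-adjoint $n\times n$ matrices. Inside $S$, let $C$ be the closed convex cone generated by the ``elementary'' matrices
\[
\Big(\big(\,(1 - \varphi(\alpha, \lambda_i)\overline{\varphi(\alpha,\lambda_j)})\, G_{ij}\,\big)\Big)_{i,j=1}^n,
\]
where $\alpha$ ranges over $\overline{\mathbb D}$ and $G = (G_{ij})$ ranges over all positive semidefinite $n\times n$ matrices (equivalently, $G$ is the Gram matrix of an arbitrary weak kernel on $F$). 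The target is to show that the matrix $M_f := \big((1 - f(\lambda_i)\overline{f(\lambda_j)})\big)$ lies in $C$; once that is known, writing $M_f$ as a finite (or limiting) sum $\sum_m (1-\varphi(\alpha_m,\cdot)\overline{\varphi(\alpha_m,\cdot)})\,G^{(m)}$ and bundling the $\alpha_m$'s with point masses $\mu = \sum_m \|G^{(m)}\|\,\delta_{\alpha_m}$ gives a $C(\overline{\mathbb D})^*$-valued kernel $\Delta$ of exactly the form~\eqref{ExampleofDelta}, satisfying the claimed identity. So the crux is the membership $M_f \in C$.

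For that I would argue by contradiction: if $M_f \notin C$, Hahn--Banach separates $M_f$ from the closed cone $C$ by a linear functional on $S$, i.e.\ by a self-adjoint matrix $B = (B_{ij})$ with $\langle B, X\rangle := \sum_{ij} B_{ij}\overline{X_{ij}} \ge 0$ for all $X \in C$ but $\langle B, M_f\rangle < 0$. Testing $B$ against the rank-one positive matrices $G_{ij} = \overline{c_i}c_j$ for arbitrary scalars $c_i$ shows, for each fixed $\alpha$, that the matrix $\big((1-\varphi(\alpha,\lambda_i)\overline{\varphi(\alpha,\lambda_j)})\,B_{ij}\big)$ is positive semidefinite; since this holds for every $\alpha \in \overline{\mathbb D}$, the kernel $k$ on $F$ with Gram matrix $(B_{ij})$ — after checking $B$ is itself positive semidefinite, which follows by taking $\alpha$ with $\varphi(\alpha,\cdot)$ small, just as in the second half of the proof of Lemma~\ref{admi} — is an admissible weak kernel in the sense of Definition~\ref{AdmissibilityonY}. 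But the hypothesis of the lemma says $\big((1-f(\lambda_i)\overline{f(\lambda_j)})\,B_{ij}\big)$ is positive semidefinite, i.e.\ $\langle B, M_f\rangle = \sum_{ij} B_{ij}\overline{(1-f(\lambda_i)\overline{f(\lambda_j)})} \ge 0$, contradicting $\langle B, M_f\rangle < 0$. Hence $M_f \in C$.

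The main obstacle, and the place where care is needed, is the passage from the finite-set statement to a single global $\Delta$ on $\mathbb G\times\mathbb G$ taking values in $C(\overline{\mathbb D})^*$: one must ensure the measures produced on each finite set are uniformly bounded (a bound like $\mu(\overline{\mathbb D}) \le$ const is automatic because evaluating the identity at a diagonal point $s=t$, $p=q$ controls $\Delta((s,p),(s,p))(1 - |\varphi(\cdot,s,p)|^2)$, and $1-|\varphi(\cdot,s,p)|^2$ is bounded below on $\overline{\mathbb D}$ for fixed $(s,p)\in\mathbb G$) and then extract a weak-$*$ convergent limit, using a Kurosh-type / inverse-limit or a direct weak-$*$ compactness argument on the space of $C(\overline{\mathbb D})^*$-valued kernels, to glue the finite pieces into one positive semidefinite kernel $\Delta$. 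A secondary technical point is the closedness of the cone $C$ (needed for Hahn--Banach): on a finite set this is a finite-dimensional statement and closedness follows because the generating set, after normalizing $\|G\|=1$ and using compactness of $\overline{\mathbb D}$, is compact, so its convex conical hull is closed. Modulo these standard functional-analytic maneuvers — all of which have close analogues in \cite{Agler} and \cite{AMc-book} — the proof is the separation argument above.
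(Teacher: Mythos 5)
Your proposal follows the paper's strategy almost step for step: localize to a finite set, separate the target matrix from a closed cone by Hahn--Banach, show the separating functional is an admissible (weak) kernel, derive a contradiction from the hypothesis, and then glue the finite pieces together by a Kurosh/weak-$*$-compactness argument. Your cone $C$ (conical hull of the elementary matrices $(1-\varphi(\alpha,\cdot)\overline{\varphi(\alpha,\cdot)})G$) is in effect the same as the paper's wedge $\mathcal T_Y$, and your compactness argument for its closedness is a perfectly good alternative to the paper's uniform-boundedness-plus-weak-$*$ argument.

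There are two places where the argument as written does not quite go through. First, the justification that $B$ is positive semidefinite --- ``taking $\alpha$ with $\varphi(\alpha,\cdot)$ small, as in the second half of the proof of Lemma~\ref{admi}'' --- does not work: for a fixed point of $\mathbb G$ the range of $\varphi(\cdot,s,p)$ over $\overline{\mathbb D}$ need not come close to $0$, and Lemma~\ref{admi} uses special $\alpha$'s only to get boundedness of $M_s,M_p$, not positivity of $k$. What you actually need is the Schur-product observation the paper makes: by~(\ref{co-ordinates}), $B_\alpha = \big(1-\varphi(\alpha,\lambda_i)\overline{\varphi(\alpha,\lambda_j)}\big)^{-1}$ is a kernel, so Schur-multiplying the (already known) positive semidefinite matrix $\big((1-\varphi(\alpha,\lambda_i)\overline{\varphi(\alpha,\lambda_j)})B_{ij}\big)$ by $B_\alpha$ gives that $B$ itself is positive semidefinite. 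Second, and more substantively, the final step ``but the hypothesis of the lemma says $\big((1-f(\lambda_i)\overline{f(\lambda_j)})B_{ij}\big)$ is positive semidefinite'' is a gap: the hypothesis is stated for \emph{admissible kernels} on all of $\mathbb G$ (Definition~\ref{admissibility}), i.e.\ for strictly positive definite kernels on $\mathbb G\times\mathbb G$, whereas $B$ extended by zero is only a degenerate weak kernel supported on the finite set. The paper bridges this by taking any genuine admissible kernel $K_1$ on $\mathbb G$, noting that $B+\epsilon K_1$ is admissible for every $\epsilon>0$, applying the hypothesis to it, and then letting $\epsilon\to 0$ to conclude $(1-f\overline f)\cdot B\ge 0$. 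You need this perturbation step (or some equivalent approximation) to legitimately invoke the hypothesis on $B$.
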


\begin{proof}

\textbf{1. The family of kernels} $\mathbf{B_\alpha}$
The first step in this proof is the fact that for every $\alpha$ in the closed unit disk $\overline{\mathbb D}$,
$$ B \big( \alpha , (s,p), (t,q) \big) = \frac{1}{1 - \varphi(\alpha, s, p) \overline{\varphi(\alpha, t , q)}} $$
is a kernel on $\mathbb G \times \mathbb G$.

By using the characterization (\ref{co-ordinates}) and by using the Szego kernel
 $$ (z,w) \rightarrow \frac{1}{1 - z\overline{w}}$$
 of the open unit disk, it is obvious that $B \big( \alpha , \cdot , \cdot \big)$ is a kernel.

\textbf{2. The closed wedge} Let $Y \subset \mathbb G$ be a finite set. Let the cardinality of $Y$ be $N$. Its elements are $\lambda_1, \lambda_2, \ldots ,\lambda_N$. We shall need to use the co-ordinates of $\lambda_i$. So let $\lambda_i = (s_i , p_i)$. Let
 \begin{eqnarray*} \mathcal T_Y & = & \{ \Delta\big( ( s, p) , (t, q) \big) \left( 1 - \varphi(\cdot , s , p) \overline{\varphi(\cdot , t, q)} \right) \\
 & : &  \Delta \mbox{ is a } C(\Dbar)^* \mbox{ valued positive semi definite function  on } \mathbb G \times \mathbb G\}, \end{eqnarray*}
i.e., a member of $ \mathcal T_Y$ is an $N \times N$ matrix whose $(i,j)$th. entry is of the form
\begin{equation} \label{TYelement}  \Delta ((s_i,p_i) , (s_j,p_j)) \left( 1 - \varphi(\cdot , s_i , p_i) \overline{\varphi(\cdot , s_j, p_j)} \right) \end{equation}
  Clearly, $ \mathcal T_Y$ is a wedge in the set of $N \times N$ self-adjoint matrices, i.e., $ \mathcal T_Y$ is convex and when an element is multiplied by a non-negative real number, it remains in the set.

We use the fact that $B_\alpha$ is a kernel to show that the kernel $\mathbf{1} ((s_i, p_i) , (s_j , p_j)) = 1$ for all $i,j = 1,2, \ldots ,N$ is in $\mathcal T_Y$. To that end, consider a probability measure $\mu$ on $\Dbar$ and define a particular positive semi definite function $\Delta$ by
$$ \Delta \big( (s,p), (t,q) \big) f = \int_{{\Dbar}} B \big( \alpha , (s,p), (t,q) \big) f(\alpha) d\mu(\alpha).$$
Indeed, $\mathbf{1}$ is $ \Delta \big( (s,p), (t,q) \big) \left( 1 - \varphi(\cdot , s , p) \overline{\varphi(\cdot , t, q)} \right) $ and hence is in $\mathcal T_Y$ by definition of $\mathcal T_Y$. It is clear from the definition of $\mathcal T_Y$ that the Schur product of an element of $\mathcal T_Y$ and a non-negative definite $N \times N$ matrix is in $\mathcal T_Y$. Thus any non-negative definite $N \times N$ matrix, being the Schur product of itself and $\mathbf{1}$  is in $\mathcal T_Y$. In particular, the rank one kernel $(( c_i \bar{c_j} ))$ is in $\mathcal T_Y$.

We would like to show that the set $ \mathcal T_Y$ is closed. Consider a sequence $\Delta_n$ such that $\Delta_n ((s_i,p_i) , (s_j,p_j)) \left( 1 - \varphi(\cdot , s_i , p_i) \overline{\varphi(\cdot , s_j, p_j)} \right)$ converges to an $N \times N$ matrix $A = (( a_{ij} ))$. We need to show that $A$ is in $ \mathcal T_Y$. We use the fact that for any $(s,p) \in \mathbb G$, $\sup\{ | \varphi (\alpha , s, p) | : \alpha \in \Dbar \} < 1$.
Thus there is an $\epsilon > 0$ such that $ 1 - | \varphi (\alpha , s, p) |^2 > \epsilon$ for all $\alpha \in \Dbar$. Since $\Delta_n ((s_i,p_i) , (s_i,p_i))$ is a positive linear functional, we have
$$ \Delta_n ((s_i,p_i) , (s_i,p_i)) \big( 1 - | \varphi (\alpha , s_i, p_i) |^2 \big) > \Delta_n ((s_i,p_i) , (s_i,p_i)) (\epsilon 1) = \epsilon \| \Delta_n ((s_i,p_i) , (s_i,p_i)) \|.$$
Taking limit, we obtain $a_{ii} > \epsilon  \| \Delta_n ((s_i,p_i) , (s_i,p_i)) \|.$ Finiteness of the set $Y$ guarantees that we get a single $\epsilon$ serving for all $i$. Then the fact that each $\Delta_n$ is a positive semi definite function ensures that each $\Delta_n ((s_i,p_i) , (s_j,p_j)) $ is uniformly norm bounded in $n$. Using weak $*$-compactness and again finiteness of the set $Y$, we get a subsequence $\{ n_l \}$ such that $\Delta_{n_l} ((s_i,p_i) , (s_j,p_j))$ is convergent, to  $\Delta ((s_i,p_i) , (s_j,p_j))$ say, for every $i$ and $j$. Thus,
\begin{eqnarray*} a_{ij} & = & \lim_{n_l \rightarrow \infty } \Delta_{n_l} ((s_i,p_i) , (s_j,p_j)) \left( 1 - \varphi(\cdot , s_i , p_i) \overline{\varphi(\cdot , s_j, p_j)} \right)\\
 & = & \Delta ((s_i,p_i) , (s_j,p_j)) \left( 1 - \varphi(\cdot , s_i , p_i) \overline{\varphi(\cdot , s_j, p_j)} \right) \end{eqnarray*}
which is what was required to be shown for membership of $A$ in the wedge $ \mathcal T_Y$. So $ \mathcal T_Y$ is closed.

\textbf{3. The Hahn-Banach functional} Define $g : \mathbb G \times \mathbb G \rightarrow \mathbb C$ by
$$ g\big( (s,p), (t,q) \big) = 1 - f(s,p) \overline{f(t,q)}.$$
Then $g$ is self-adjoint (i.e., $g \big( (s,p) , (t,q) \big) = \overline{g \big( (t,q) , (s,p) \big)} $) and satisfies
$$ g \cdot k : \big((s,p) , (t,q)\big) \rightarrow g\big( (s,p) , (t,q) \big) k\big( (s,p) , (t,q) \big)$$
is positive semi definite for every admissible kernel $k$. We want to show that the restriction $h$ of $g$ to $ Y \times Y$ is in $\mathcal T_Y$.

If not, then by Hahn-Banach Theorem, there is a real linear functional $L$ on $N \times N$ matrices which is non-negative on $\mathcal T_Y$, but is negative when evaluated at $h$. This linear functional can be assumed to be of the form $L(T) =$ tr$TK^t$ for some self-adjoint matrix $K$. Now,
$$\mbox{ tr }TK^t = \sum_{i=1}^{N} \sum_{j=1}^{N} t_{il} k_{il}.$$
In particular, if $T = (( c_i \overline{c_j} ))$ which is in $\mathcal T_Y$, then $\sum_{i=1}^{N} \sum_{j=1}^{N} c_i \overline{c_l} k_{il} \ge 0$. This shows that $K$ is a positive semi-definite $N \times N$ matrix.  We shall denote its extension to $\mathbb G$ as a weak kernel by $K$ as well. This weak kernel is admissible on $Y$. Indeed, all that needs to be shown is that
$$ (1 - \varphi(\alpha, s, p) \overline{\varphi(\alpha, t, q)}) K \big( (s,p) , (t,q) \big)$$
is a weak kernel on $Y$ for every $\alpha \in \Dbar$. This is true because
 \begin{eqnarray*} & & \sum_{i,j=1}^n c_i \overline{c_j} \left( (1 - \varphi(\alpha, s_i, p_i) \overline{\varphi(\alpha, s_j, p_j)}) K\big( (s_i,p_i) , (s_j,p_j) \big) \right) \\
 & = & L \big(c_i \overline{c_j} \left( (1 - \varphi(\alpha, s_i, p_i) \overline{\varphi(\alpha, s_j, p_j)}) \big) \right) \end{eqnarray*}
and $L$ is non-negative on elements of $\mathcal T_Y$.

Now, let $K_1$ be any admissible kernel. Then $K + \epsilon K_1$ is an admissible kernel for any $\epsilon > 0$. By hypothesis, $g \cdot ( K + \epsilon K_1 ) $ is a weak kernel because the Schur product of $g$ with any admissible kernel is positive   semi definite. Since this is true for every $\epsilon > 0$, we have $g \cdot K$ is a weak kernel. But that means that $L(h) \ge 0$ which contradicts the way $L$ has been chosen.

Now a standard application of Kurosh's lemma completes the proof of the Decomposition.
\end{proof}

To go to the next step of proof of the Realization Theorem, we now prove the step \textbf{(D)} implies \textbf{(R)} of the theorem. For that, we first need a preparatory lemma.

\begin{lem} \label{GNS} If $\Delta : \mathbb G \times \mathbb G \rightarrow C(\Dbar)^*$ is a positive semi definite kernel, then there is a Hilbert space $H$ and
a function $L : \mathbb G \rightarrow B(C(\Dbar) , H)$ such that $$\Delta \left((s,p), (t,q) \right) (h_1 \overline{h_2}) =
\langle L(s,p) h_1, L(t,q)h_2 \rangle_H$$ for all $h_1, h_2 \in C(\Dbar)$ and $(s,p), (t,q) \in \mathbb G$.

Moreover, there is a unital $*$-representation $\pi : C(\Dbar) \rightarrow B(H)$ such that $L(s,p) h_1h_2 = \pi(h_1) L(s,p) h_2$.

\end{lem}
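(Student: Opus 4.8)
The plan is to build $H$ and $L$ by a Kolmogorov--Gelfand--Naimark--Segal (GNS)-type construction, treating $\Delta$ as a positive semi definite kernel valued in the $C(\Dbar)^*$ and then promoting the $C(\Dbar)$-bimodule structure to a $*$-representation. First I would form the algebraic tensor product $\mathcal{F}_0 = \bigoplus_{(s,p)\in\mathbb G} C(\Dbar)$, i.e. the vector space of finitely supported functions on $\mathbb G$ with values in $C(\Dbar)$, writing a generic element as a finite sum $\sum_i e_{(s_i,p_i)}\otimes h_i$. On $\mathcal{F}_0$ I would define the sesquilinear form
\begin{equation} \label{GNSform}
\Big\la \sum_i e_{(s_i,p_i)}\ot h_i,\ \sum_j e_{(t_j,q_j)}\ot g_j \Big\ra_0 = \sum_{i,j} \Delta\big((s_i,p_i),(t_j,q_j)\big)\big(h_i\overline{g_j}\big).
\end{equation}
Positive semi definiteness of $\Delta$ as a kernel is exactly the statement that $\la\cdot,\cdot\ra_0$ is a non-negative form. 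I would then quotient $\mathcal{F}_0$ by the subspace $\mathcal{N}=\{x : \la x,x\ra_0 = 0\}$ (using Cauchy--Schwarz for $\la\cdot,\cdot\ra_0$ to see $\mathcal N$ is a subspace), and let $H$ be the completion of $\mathcal{F}_0/\mathcal{N}$. Define $L(s,p)h = [\,e_{(s,p)}\ot h\,]$, the class of $e_{(s,p)}\ot h$ in $H$. Linearity of $L(s,p)$ in $h$ is clear, boundedness follows because $\|L(s,p)h\|^2 = \Delta((s,p),(s,p))(|h|^2)$ and $\Delta((s,p),(s,p))$ is a positive (hence bounded) linear functional on $C(\Dbar)$ with $|h|^2 \le \|h\|_\infty^2 \cdot 1$; so $L(s,p)\in B(C(\Dbar),H)$. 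The defining identity $\Delta((s,p),(t,q))(h_1\overline{h_2}) = \la L(s,p)h_1, L(t,q)h_2\ra_H$ is then immediate from \eqref{GNSform}.

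For the second assertion I would construct $\pi$ by letting $C(\Dbar)$ act on $\mathcal{F}_0$ by $a\cdot(e_{(s,p)}\ot h) = e_{(s,p)}\ot(ah)$, extended linearly. The key check is that this action is contractive for $\la\cdot,\cdot\ra_0$, or at least bounded, so that it descends to $\mathcal{F}_0/\mathcal{N}$ and extends to $H$; concretely I need $\la a\cdot x, a\cdot x\ra_0 \le \|a\|_\infty^2 \la x,x\ra_0$. This follows from the standard operator-matrix positivity trick: for fixed points $(s_i,p_i)$ the block functional $((\,\Delta((s_i,p_i),(s_j,p_j))\,))$ is completely positive in the appropriate sense, and multiplication by $a$ corresponds to a Schur-type operation dominated by $\|a\|_\infty$; equivalently one checks $\|a\|_\infty^2\la x,x\ra_0 - \la ax,ax\ra_0 \ge 0$ by testing against the positive element $(\|a\|_\infty^2 - |a|^2)\in C(\Dbar)$ inside each slot of $\Delta$. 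Then $\pi(a)$ is the bounded extension of this action; $\pi$ is multiplicative and $\pi(\bar a)=\pi(a)^*$ because $\la a(e_{(s,p)}\ot h_1), e_{(t,q)}\ot h_2\ra_0 = \Delta((s,p),(t,q))(ah_1\overline{h_2}) = \la e_{(s,p)}\ot h_1, \bar a(e_{(t,q)}\ot h_2)\ra_0$, using that the functional $\Delta((s,p),(t,q))$ is applied to the product $ah_1\overline{h_2}$ and $\overline{\bar a h_2}=a\overline{h_2}$. Unitality $\pi(1)=I_H$ is clear since $1\cdot(e_{(s,p)}\ot h)=e_{(s,p)}\ot h$. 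Finally $L(s,p)(h_1h_2) = [e_{(s,p)}\ot h_1h_2] = [h_1\cdot(e_{(s,p)}\ot h_2)] = \pi(h_1)[e_{(s,p)}\ot h_2] = \pi(h_1)L(s,p)h_2$, which is exactly the module identity claimed.

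The main obstacle I anticipate is the boundedness of the $C(\Dbar)$-action on the pre-Hilbert space $\mathcal{F}_0/\mathcal{N}$, i.e. verifying $\la ax,ax\ra_0 \le \|a\|_\infty^2\la x,x\ra_0$. Naively this is not obvious because $\Delta$ is only assumed positive semi definite as a kernel, not completely positive, so one must extract the needed matrix-level positivity. The clean way is to fix the finite support $\{(s_1,p_1),\dots,(s_m,p_m)\}$ of $x$ and work with the single $C(\Dbar)^*$-valued $m\times m$ positive block $\big(\!(\Delta((s_i,p_i),(s_j,p_j))\,)\!\big)$; positivity of the scalar kernel for \emph{all} choices of $h_i$ forces the block to be "completely positive" as a map into $m\times m$ matrices over the (commutative) $C^*$-algebra $C(\Dbar)$, and for commutative $C^*$-algebras positivity automatically implies complete positivity, which gives the desired contractivity of the module action. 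I would isolate this as the one nontrivial lemma inside the proof; everything else is bookkeeping in the GNS construction. (One should also remark that $C(\Dbar)$ being unital makes the module structure and the identity $L(s,p)1 = [e_{(s,p)}\ot 1]$ unambiguous, and that separability is not needed anywhere.)
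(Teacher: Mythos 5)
Your construction is correct and matches the paper's proof exactly: the paper forms $V\otimes C(\Dbar)$ (with $V$ the free vector space on the points of $\mathbb G$) equipped with the same sesquilinear form $\la (s,p)\otimes h_1,(t,q)\otimes h_2\ra = \Delta((s,p),(t,q))(h_1\overline{h_2})$, quotients by the null space, completes, and defines $L$ and $\pi$ precisely as you do, referring to \cite{Ambrozie} and \cite{DM} for the boundedness and well-definedness checks that you spell out. The only thing I would streamline is the aside about complete positivity of the block map, which is unnecessary and slightly misstated here --- the direct square-root check you already give, namely $\|a\|_\infty^2\la x,x\ra_0 - \la ax,ax\ra_0 = \sum_{i,j}\Delta\big((s_i,p_i),(s_j,p_j)\big)\big((bh_i)\overline{(bh_j)}\big)\ge 0$ with $b=(\|a\|_\infty^2-|a|^2)^{1/2}\in C(\Dbar)$ real-valued, is complete and self-contained by the defining positivity of $\Delta$.
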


\begin{proof} The construction is standard and hence we only sketch and refer the reader to \cite{Ambrozie} and \cite{DM} for details. Let $V$ denote the vector space with basis $\{ (s,p) \in \mathbb G \}$. Take $V \otimes C(\Dbar)$ with the positive semidefinite sesquilinear form on elementary tensors as
$$ \langle (s,,p) \otimes h_1, (t,q) \otimes h_2 \rangle = \Delta \left((s,p), (t,q) \right) (h_1 \overline{h_2})$$
and then extending it by linearity. We quotient by the null space of this form and complete to get $H$. Define $L(s,p) h = (s,p) \otimes h$ for $h \in C(\Dbar)$ and extend linearly. The representation $\pi$ is defined by $\pi(h)((s,p) \otimes h^\prime) = (s,p) \otimes h h^\prime$. \end{proof}

\begin{lem}[Realization]
If there  is a positive semi definite kernel $\Delta : \mathbb G \times \mathbb G \rightarrow C( \Dbar )^*$ such that
        $$ 1 - f(s,p) \overline{f(t,q)} = \Delta ( (s,p), (t,q)) \big( 1 - \varphi(\cdot, s, p) \overline{\varphi(\cdot , t, q)} \big),$$
        then there is a Hilbert space $H$, a unital $*$-representation $\pi : C(\Dbar) \rightarrow B(H)$ and an isometry $V : \mathbb C \oplus H \rightarrow \mathbb C \oplus H$ such that writing $V$ as
            $$ V = \left(
                     \begin{array}{cc}
                       A & B \\
                       C & D \\
                     \end{array}
                   \right)$$
                   we have $f(s,p) = A +  B \pi(\varphi(\cdot, s, p)) \big( I_H -  D \pi(\varphi(\cdot, s, p)) \big)^{-1} C$.

 \end{lem}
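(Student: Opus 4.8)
The plan is to run the standard ``lurking isometry'' argument. First I would apply Lemma \ref{GNS} to the kernel $\Delta$ furnished by the hypothesis, obtaining a Hilbert space $H$, a unital $*$-representation $\pi : C(\Dbar) \to B(H)$, and a map $L : \mathbb G \to B(C(\Dbar), H)$ with $\Delta((s,p),(t,q))(h_1\overline{h_2}) = \langle L(s,p)h_1, L(t,q)h_2 \rangle_H$ and $L(s,p)(h_1 h_2) = \pi(h_1)L(s,p)h_2$. Writing $\mathbf 1$ for the constant function $1$ in $C(\Dbar)$, setting $u(s,p) = L(s,p)\mathbf 1 \in H$ and $\gamma(s,p) = \pi(\varphi(\cdot,s,p)) \in B(H)$, and noting that $1 = \mathbf 1\cdot\overline{\mathbf 1}$ while $\varphi(\cdot,s,p)\overline{\varphi(\cdot,t,q)}$ has the form $h_1\overline{h_2}$, the decomposition in the hypothesis splits as
\begin{equation*} 1 - f(s,p)\overline{f(t,q)} = \langle u(s,p), u(t,q)\rangle_H - \langle L(s,p)\varphi(\cdot,s,p), L(t,q)\varphi(\cdot,t,q)\rangle_H , \end{equation*}
and the multiplicativity $L(s,p)\varphi(\cdot,s,p) = \pi(\varphi(\cdot,s,p))L(s,p)\mathbf 1 = \gamma(s,p)u(s,p)$ rearranges this to
\begin{equation*} f(s,p)\overline{f(t,q)} + \langle u(s,p), u(t,q)\rangle_H = 1 + \langle \gamma(s,p)u(s,p), \gamma(t,q)u(t,q)\rangle_H . \end{equation*}

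I would then read both sides as inner products in $\mathbb C \oplus H$: the identity says precisely that the families $(1,\gamma(s,p)u(s,p))$ and $(f(s,p),u(s,p))$ have matching Gram matrices over $(s,p)\in\mathbb G$. Consequently the assignment $(1,\gamma(s,p)u(s,p)) \mapsto (f(s,p),u(s,p))$ is well defined and isometric on finite linear combinations, hence extends to a Hilbert-space isometry $V_0$ from the closed linear span of $\{(1,\gamma(s,p)u(s,p)) : (s,p)\in\mathbb G\}$ onto the closed linear span of $\{(f(s,p),u(s,p)) : (s,p)\in\mathbb G\}$ inside $\mathbb C\oplus H$. To promote $V_0$ to an isometry $V$ of all of $\mathbb C\oplus H$ I would, if necessary, enlarge the data: replace $(H,\pi)$ by $(H\oplus K,\ \pi\oplus\rho)$ where $K$ has sufficiently large dimension and $\rho$ is a unital $*$-representation of $C(\Dbar)$ on $K$, say $\rho(h)=h(0)I_K$; this changes neither $u(s,p)$ nor $\gamma(s,p)u(s,p)$, but makes the orthogonal complements of the two spans large enough for an isometric extension to exist. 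Writing $V = \begin{pmatrix} A & B \\ C & D \end{pmatrix}$, the relation $V(1,\gamma(s,p)u(s,p)) = (f(s,p),u(s,p))$ becomes
\begin{equation*} A + B\gamma(s,p)u(s,p) = f(s,p), \qquad C + D\gamma(s,p)u(s,p) = u(s,p). \end{equation*}

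The last move is to solve for $u(s,p)$ and eliminate it. Because $\Dbar$ is compact and $\alpha\mapsto\varphi(\alpha,s,p)$ is continuous, $\sup_{\alpha\in\Dbar}|\varphi(\alpha,s,p)|$ is attained, and by the Agler--Young characterization (\ref{co-ordinates}) it lies in the open disk for every $(s,p)\in\mathbb G$; hence $\|\gamma(s,p)\|=\|\pi(\varphi(\cdot,s,p))\|\le\sup_{\alpha\in\Dbar}|\varphi(\alpha,s,p)|<1$. Since $D$ is a contraction (a corner of the isometry $V$), $\|D\gamma(s,p)\|<1$, so $I_H - D\gamma(s,p)$ is invertible via a Neumann series; the second displayed equation then gives $u(s,p) = (I_H - D\gamma(s,p))^{-1}C$, and substituting into the first yields $f(s,p) = A + B\gamma(s,p)(I_H-D\gamma(s,p))^{-1}C = A + B\pi(\varphi(\cdot,s,p))\big(I_H - D\pi(\varphi(\cdot,s,p))\big)^{-1}C$, the desired realization. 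I expect the only genuinely delicate point to be the extension of $V_0$ to a full isometry (the dimension count on the orthogonal complements, handled by the enlargement above); everything else is the bookkeeping of the lurking-isometry trick together with the strict bound $\|\gamma(s,p)\|<1$ that makes the resolvent exist.
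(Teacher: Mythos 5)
Your proof is correct and follows essentially the same route as the paper: apply Lemma~\ref{GNS} to factor $\Delta$, rearrange to obtain the Gram-matrix identity, construct the lurking isometry $V$ on spans and extend it by enlarging $H$, then solve the $2\times 2$ system for the realization formula. The only difference is that you supply two details the paper glosses over --- the explicit justification that $I_H - D\pi(\varphi(\cdot,s,p))$ is invertible via $\|\pi(\varphi(\cdot,s,p))\|<1$ and a concrete choice of representation $\rho$ on the auxiliary space --- which is a welcome tightening, not a different approach.
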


 \begin{proof}
 We rewrite the given condition as
 $$ 1 + \Delta ( (s,p), (t,q)) \big( \varphi(\cdot, s, p) \overline{\varphi(\cdot , t, q)} \big) = f(s,p) \overline{f(t,q)} + \Delta ( (s,p), (t,q)) \big( 1  \big).$$
 By the lemma above, there is a Hilbert space $H$ and a function $L : \mathbb G \times \mathbb G \rightarrow B(C(\Dbar) , H)$ such that
 $$\Delta \left((s,p), (t,q) \right) (h_1 \overline{h_2}) =
\langle L(s,p) h_1, L(t,q)h_2 \rangle_H$$ for all $h_1, h_2 \in C(\Dbar)$ and $(s,p), (t,q) \in \mathbb G$. Hence,
 $$ 1 +  \langle L (s,p) \varphi(\cdot , s, p), L (t,q) \varphi(\cdot , t, q) \rangle = f(s,p) \overline{f(t,q)} + \langle L (s,p) 1 , L (t,q) 1 \rangle.$$
 By virtue of the representation $\pi$ obtained in the lemma above, this is the same as
 $$ 1 +   \langle \pi \varphi(\cdot , s, p) L (s,p) 1,  \pi \varphi(\cdot , t, q) L (t,q) 1 \rangle = f(s,p) \overline{f(t,q)} + \langle L (s,p) 1 , L (t,q) 1 \rangle.$$
 Now we can define an isometry $V$ from the span of $1 \oplus \pi \varphi(\cdot , s, p) L (s,p) 1 : (s,p) \in \mathbb G$ into the span of $f(s,p) \oplus L (s,p) 1$ such that
 $$ V \left( \begin{array}{c} 1 \\ \pi \varphi(\cdot , s, p) L (s,p) 1 \end{array} \right) = \left( \begin{array}{c} f(s,p) \\ L (s,p) 1 \end{array} \right)$$
 and then extending by linearity. By a standard technique of adding an infinite dimensional Hilbert space to $H$, if required, we can extend $V$ to an isometry from $\mathbb C \oplus H$ into itself. Now, writing $V$ as $\textmatrix A&B\\C&D\\$ as an operator on $\mathbb C \oplus H$, we get
 \begin{eqnarray}
 A + B \pi \varphi(\cdot , s, p) L (s,p) 1 & = & f (s,p) \mbox{ and} \label{formulaforphi} \\
 C + D \pi \varphi(\cdot , s, p) L (s,p) 1 & = & L(s,p) 1. \nonumber \end{eqnarray}
 The second equation above gives that
$$ L(s,p) 1  =  (I_H - D \pi \varphi(\cdot , s, p) )^{-1} C  $$
and hence from (\ref{formulaforphi}), we get
$$ f(s,p)  =  A + B \pi \varphi(\cdot , s, p) (I_H - D \pi \varphi(\cdot , s, p) )^{-1} C . $$
 \end{proof}

 With the above lemma, we have completed the proof of \textbf{(D)} implies \textbf{(R)}. We shall end this section with the following lemma which completes the proof of the equivalences stated in the Realization Theorem by showing that \textbf{(R)} implies \textbf{(H)}.

\begin{lem} Consider a Hilbert space $H$, a unital $*$-representation of $C(\Dbar)$ on $H$ and an isometry $V : \mathbb C \oplus H \rightarrow \mathbb C \oplus H$ of the following form
            $$ V = \left(
                     \begin{array}{cc}
                       A & B \\
                       C & D \\
                     \end{array}
                   \right).$$
                   Then the function $$f(s,p) = A + B \pi \varphi(\cdot , s, p) (I_H - D \pi \varphi(\cdot , s, p) )^{-1} C$$ is in the closed unit ball of $H^\infty(\mathbb G)$.

 \end{lem}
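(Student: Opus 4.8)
The plan is to first verify that the formula defining $f$ makes sense and is holomorphic on $\mathbb G$, and then to extract the bound $\|f\|_\infty \le 1$ from a single application of the isometry property of $V$ to a particular vector --- the familiar ``lurking isometry'' argument.

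\emph{Well-definedness and holomorphy.} Fix $(s,p) \in \mathbb G$ and abbreviate $z = \pi(\varphi(\cdot, s, p)) \in B(H)$. Being a $*$-homomorphism between $C^*$-algebras, $\pi$ is norm non-increasing, so $\|z\| \le \|\varphi(\cdot, s, p)\|_{C(\Dbar)} = \sup_{\alpha \in \Dbar} |\varphi(\alpha, s, p)|$; by (\ref{co-ordinates}) the continuous function $\varphi(\cdot, s, p)$ carries the compact set $\Dbar$ into the open disk $\mathbb D$, so this supremum is strictly smaller than $1$. Since $V$ is an isometry, $\|D\| \le 1$, whence $\|Dz\| < 1$, so $I_H - Dz$ is invertible (Neumann series) and $f(s,p)$ is meaningful. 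Holomorphy of $f$ on $\mathbb G$ follows because $(s,p) \mapsto \varphi(\cdot, s, p)$ is holomorphic as a $C(\Dbar)$-valued function --- immediate from $\varphi(\alpha, s, p) = (2\alpha p - s)/(2 - \alpha s)$, whose denominator never vanishes on $\Dbar \times \mathbb G$ --- and composing with the bounded linear map $\pi$, then inverting and multiplying operator-valued holomorphic functions, all preserve holomorphy.

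\emph{The norm estimate.} Fix $(s,p) \in \mathbb G$, keep $z = \pi(\varphi(\cdot, s, p))$, identify $C \in B(\mathbb C, H)$ with the vector $C1 \in H$, and set $u = (I_H - Dz)^{-1}C1 \in H$. Then $C1 + Dzu = (I_H - Dz)^{-1}C1 = u$, and $A1 + Bzu = f(s,p)$ by the defining formula, so
$$ V \begin{pmatrix} 1 \\ zu \end{pmatrix} = \begin{pmatrix} f(s,p) \\ u \end{pmatrix} . $$
Comparing squared norms in $\mathbb C \oplus H$ and using that $V$ is an isometry gives $1 + \|zu\|^2 = |f(s,p)|^2 + \|u\|^2$, that is,
$$ |f(s,p)|^2 = 1 - \langle (I_H - z^*z) u , u \rangle \le 1 , $$
the inequality because $\|z\| < 1$ forces $I_H - z^*z \ge 0$. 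As $(s,p)$ was arbitrary, $\|f\|_\infty \le 1$, and combined with holomorphy this says precisely that $f$ lies in the closed unit ball of $H^\infty(\mathbb G)$, completing the cycle of implications in the Realization Theorem.

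I do not expect a genuine obstacle: the contractivity estimate is the one-line computation above, and the only slightly delicate point is the routine bookkeeping needed to see that $(s,p) \mapsto \pi(\varphi(\cdot, s, p))$, and hence $f$, is holomorphic.
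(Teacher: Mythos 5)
Your proof is correct and follows essentially the same route as the paper: both rest on the observation that $\|\pi(\varphi(\cdot,s,p))\| \le \sup_{\alpha\in\Dbar}|\varphi(\alpha,s,p)| < 1$ (since $\pi$ is a norm non-increasing unital $*$-representation and $\varphi(\cdot,s,p)$ carries $\Dbar$ into $\mathbb D$), and both then appeal to the standard lurking-isometry computation. The paper merely cites Chapter 6 of Agler--McCarthy for that computation, whereas you have written it out explicitly; the substance is the same.
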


 \begin{proof}
 We would like to note that if we write $z(s,p)$ for $\pi \varphi(\cdot , s, p)$, then the formula above translates to
 $ f(s,p) = A + z(s,p) B ( I - z(s,p) D)^{-1} C$. Since $\pi$ is a unital $*$-representation, $\| \pi \| = 1$ and hence $z(s,p)$ is in the open unit disk because
 $$ | z(s,p) | \le \| \pi \|  \sup\{ | \varphi (\alpha , s, p) | : \alpha \in \Dbar \} < 1.$$
  It is well-known then that such a function has modulus no greater than $1$, see Chapter 6 of \cite{AMc-book} for example. \end{proof}

\section{Proof of the Interpolation Theorem}

If there is an $f$ in the norm unit ball of $H^\infty(\mathbb G)$ interpolating the data, then the positive definiteness of the matrix (\ref{Pick-condition}) can be obtained with the same argument as in the proof of Lemma \ref{HimpliesM}.

Conversely, if the matrix \ref{Pick-condition} is non-negative definite, let its rank be $M$. As in the proof of Lemma \ref{MimpliesD}, we get a positive semi definite kernel  $\Delta$ defined on $\mathbb G \times \mathbb G$ and taking values in $C(\Dbar)^*$ such that
        $$ 1 - w_i \overline{w_j} = \Delta\big( ( s_i, p_i) , (s_j, p_j) \big) \left( 1 - \varphi(\cdot , s_i , p_i) \overline{\varphi(\cdot , s_j, p_j)} \right)$$
        for $i,j=1,2, \ldots , n$. Decomposing $\Delta$ according to Lemma \ref{GNS}, we get
        $$ 1 +  \langle L (s_i,p_i) \varphi(\cdot , s_i, p_i), L (s_j,p_j) \varphi(\cdot , s_j, p_j) \rangle = w_i \overline{w_j} + \langle L (s_i,p_i) 1 , L (s_j,p_j) 1 \rangle $$
        and consequently with $\pi$ as in Lemma \ref{GNS}, we have
         $$ 1 +   \langle \pi \varphi(\cdot , s_i, p_i) L (s_i,p_i) 1,  \pi \varphi(\cdot , s_j, p_j) L (s_j,p_j) 1 \rangle = w_i \overline{w_j} + \langle L (s_i,p_i) 1 , L (s_j,p_j) 1 \rangle.$$

This allows us to define an isometry from the span of $1 \oplus \pi \varphi(\cdot , s_i, p_i) L (s_i,p_i) 1 : i=1, 2, \ldots ,n$ into the span of $w_i \oplus L (s_i,p_i) 1 : i=1, 2, \ldots ,n$ such that
\begin{equation} V \left( \begin{array}{c} 1 \\ \pi \varphi(\cdot , s_i, p_i) L (s_i,p_i) 1 \end{array} \right) = \left( \begin{array}{c} w_i \\ L (s_i,p_i) 1 \end{array} \right) , i=1,2, \ldots ,n. \label{V} \end{equation}
Now the span of all of $\pi \varphi(\cdot , s_i, p_i) L (s_i,p_i) 1 $ for $i=1,2, \ldots ,n$, can be at most of dimension $M$. If it is so, then we have gotten an isometry from $\mathbb C \oplus \mathbb C^M$ to itself. If the dimension falls short of $M$, then we can extend $V$ to an isometry on $\mathbb C \oplus \mathbb C^M$. Writing $V$ as $\textmatrix A & B\\ C & D\\$, let us denote by $f$ the function
$$f(s,p) = A + B \pi \varphi(\cdot , s, p) (I_H - D \pi \varphi(\cdot , s, p) )^{-1} C.$$
By the Realization Theorem of the previous section, this $f$ is indeed a function in the unit norm ball of $H^\infty(\mathbb G)$. Does this $f$ interpolate the data? The answer is yes because of (\ref{V}). That equation tells us
$$ A + B \pi \varphi(\cdot , s_i, p_i) L (s_i,p_i) 1  = w_i \mbox{ and }
 C + D \pi \varphi(\cdot , s_i, p_i) L (s_i,p_i) 1  =  L(s_i,p_i) 1.$$
for every $i=1,2, \ldots ,n.$ Now a straightforward elimination of the values of $L(s_i,p_i) 1$ from the two equations above gives us
$$ w_i = A + B \pi \varphi(\cdot , s_i, p_i) (I_H - D \pi \varphi(\cdot , s_i, p_i) )^{-1} C.$$
for every $i=1,2, \ldots ,n.$ This is what we needed.

\section{Proof of the Extension Theorem}
If $V$ has the $\mathcal A$-extension property, then given any $f \in \mathcal A$, we first get hold of a norm preserving extension $g$, i.e., a function $g \in H^\infty(\mathbb G)$ such that
$$ g|_V = f \mbox{ and } \sup_{\mathbb G} |g| = \sup_V |f|. $$
If $(S,P)$ is a $\Gamma$-contraction subordinate to $V$, then by definition, $f(S,P) = g(S,P)$ and hence
\begin{eqnarray*}
\| f(S,P) \| & = & \| g(S,P) \| \\
& \le & \sup \{ |g(s,p)| : (s,p) \in \mathbb G\} \mbox{ because } (S,P) \mbox{ is a } \Gamma-\mbox{contraction} \\
& = & \sup \{ |f(s,p)| : (s,p) \in \mathbb G\} \mbox{ because } g \mbox{ is a norm-preserving extension}. \end{eqnarray*}
Hence $V$ is an $\mathcal A$-von Neumann set. It is the converse which shows a beautiful interplay of a classical extremal problem with Hilbert space operator theory.

We begin by rephrasing the Interpolation Theorem in such a way that will be useful for proving the Extension Theorem. Let $\lambda_1 = (s_1, p_1), \lambda_2=(s_2, p_2), \ldots ,\lambda_n=(s_n, p_n)$ be $n$ distinct points in the symmetrized bidisk $\mathbb G$. We shall denote this data by $\lambda$. Given any $n \times n$ strictly positive definite matrix $ ( k(i,j) )$, we denote by $k(\cdot , j)$ the vector in $\mathbb C^n$ whose $i$th. entry is $k(i,j)$. We define a pair of commuting bounded operators $X_1$ and $X_2$ on the $n$-dimensional space $\mathcal H$ spanned by $k(\cdot , j), j=1, 2, \ldots ,n$ by
$$ X_1^* k(\cdot , j) = \bar{s}_j k (\cdot , j) \mbox{ and } X_2^* k(\cdot , j) = \bar{p}_j k (\cdot , j).$$  We shall denote by $\mathcal K_\lambda$ the set of all $n \times n$ strictly positive definite matrices $ ( k(i,j) )$ such that $k(i,i) = 1$ and the operator pair $(X_1, X_2)$ forms a $\Gamma$-contraction, i.e., the following holds:
\begin{eqnarray}
(4-s_i\bar{s_j})k(i,j) \geq 0,\;\;\;
(1-p_i\bar{p_j})k(i,j)\geq 0 \;\;\text{ and } \;\;\;\;\;\;\;\;\;\;\nonumber\\\label{admiss conds}
\big( (2 - \alpha s_i) \overline{(2 - \alpha s_j)} - (2 \alpha p_i - s_i) \overline{(2 \alpha p_j - s_j)} \big) k\big( i,j \big) \geq 0.
\end{eqnarray}
Let $w_1, w_2, \ldots ,w_n$ be $n$ points in $\overline{\mathbb{D}}$. The interpolation theorem can be rephrased as follows.

\begin{thm} \label{Interpolation-Reformulation}

There is a function $f \in H^\infty (\mathbb G)$ with $\| f \| \le 1$ and satisfying $f(\lambda_i) = w_i$ for $i=1,2, \ldots ,n$ if and only if the $n \times n$ matrix $ (1 - w_i \bar{w}_j) k(i,j) $ is positive definite for all $k \in \mathcal K_\lambda$.

\end{thm}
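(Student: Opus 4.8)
The plan is to show that Theorem~\ref{Interpolation-Reformulation} is just a finite-dimensional repackaging of the Interpolation Theorem already proved, so the whole task reduces to reconciling the two notions of ``admissible kernel'' that appear: the infinite-dimensional one (a kernel $k$ on all of $\mathbb G$ for which $(M_s,M_p)$ is a $\Gamma$-contraction on $H_k$) versus the finite matricial one captured by $\mathcal K_\lambda$. First I would observe that the Pick-type condition~(\ref{Pick-condition}) in the Interpolation Theorem only ever sees the restriction of an admissible kernel to the finite set $\{\lambda_1,\dots,\lambda_n\}$, i.e.\ an $n\times n$ matrix $(k(\lambda_i,\lambda_j))$. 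So the forward direction is immediate: if $f\in H^\infty(\mathbb G)$, $\|f\|\le 1$, interpolates the data, then for every admissible kernel $k$ the matrix $((1-w_i\overline{w_j})k(\lambda_i,\lambda_j))$ is positive semi-definite; every $k\in\mathcal K_\lambda$ is (the restriction of) such an admissible kernel, so we are done. The converse direction is where the content lies: given that $((1-w_i\overline{w_j})k(i,j))\ge 0$ for every $k\in\mathcal K_\lambda$, we must produce the interpolant, and by the Interpolation Theorem it suffices to check the same positivity for every admissible kernel $k$ on $\mathbb G$; so we need: the restriction to $Y\times Y$ ($Y=\{\lambda_1,\dots,\lambda_n\}$) of any admissible kernel on $\mathbb G$ lies in the cone generated by $\mathcal K_\lambda$ (after normalization).

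The key steps, in order, are as follows. Step 1: unwind the definition of $\mathcal K_\lambda$. By Lemma~\ref{admi} a kernel $k$ on $\mathbb G$ is admissible iff $(1-\varphi(\alpha,s,p)\overline{\varphi(\alpha,t,q)})k((s,p),(t,q))$ is a weak kernel for every $\alpha\in\overline{\mathbb D}$; writing $\varphi(\alpha,s,p)=(2\alpha p - s)/(2-\alpha s)$ and clearing the (nonvanishing) denominators $2-\alpha s_i$, this is exactly the third inequality in~(\ref{admiss conds}), while the first two inequalities are the $\alpha=0$ and the $\alpha=\pm1$ instances (the latter combined) — precisely the normalization argument used at the end of the proof of Lemma~\ref{admi} to recover boundedness of $M_s$ and $M_p$. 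Hence $\mathcal K_\lambda$ is, up to the normalization $k(i,i)=1$, exactly the set of $n\times n$ Gram-type matrices arising as restrictions of admissible kernels. Step 2: for the normalization, note that if $k$ is an admissible kernel then $k(i,i)=k(\lambda_i,\lambda_i)>0$, and conjugating by the diagonal matrix $D=\operatorname{diag}(k(i,i)^{-1/2})$ replaces $k(i,j)$ by $k(i,j)/\sqrt{k(i,i)k(j,j)}$, which still satisfies all three inequalities in~(\ref{admiss conds}) (each inequality is preserved under Schur-multiplication by the rank-one positive matrix $((\,k(i,i)^{-1/2}k(j,j)^{-1/2}\,))$) and now has unit diagonal, so it lies in $\mathcal K_\lambda$; moreover $((1-w_i\overline{w_j})k(i,j))$ and $((1-w_i\overline{w_j})k(i,j)/\sqrt{k(i,i)k(j,j)})$ have the same signature, being Schur-congruent. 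Step 3: assemble — given the hypothesis of the theorem and an arbitrary admissible kernel $k$ on $\mathbb G$, restrict to $Y$, normalize as in Step~2 to land in $\mathcal K_\lambda$, apply the hypothesis to conclude $((1-w_i\overline{w_j})k(i,j)/\sqrt{k(i,i)k(j,j)})\ge 0$, then Schur-multiply back by $((\sqrt{k(i,i)k(j,j)}))\ge 0$ to get $((1-w_i\overline{w_j})k(i,j))\ge 0$. Since this holds for every admissible kernel, the Interpolation Theorem delivers the desired $f$.

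There is one subtlety to dispatch: the Interpolation Theorem as stated quantifies over admissible kernels $k$ on all of $\mathbb G$, whereas $\mathcal K_\lambda$ consists of $n\times n$ matrices; I should note that any $n\times n$ matrix satisfying~(\ref{admiss conds}) with unit diagonal extends to a weak kernel on $\mathbb G$ that is admissible on $Y$ in the sense of Definition~\ref{AdmissibilityonY} — indeed it is already the ``admissible weak kernel on $Y$'' appearing in the proof of Lemma~\ref{MimpliesD} — and the proof of the Interpolation Theorem in Section~4 only uses admissibility of $k$ through its restriction to the interpolation nodes (the operators $T_1,T_2$ there are built on the $n$-dimensional span of the kernel functions). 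So no genuine extension to $\mathbb G$ is needed; the equivalence is a matter of bookkeeping. The main obstacle, such as it is, is purely notational: carefully matching the inequalities~(\ref{admiss conds}) against the denominator-cleared form of the weak-kernel condition in Lemma~\ref{admi}, and being careful that the strictness of ``strictly positive definite'' in the definition of $\mathcal K_\lambda$ is harmless — one can always perturb $k$ to $k+\epsilon(\text{Szego kernel restriction})$, which stays admissible (by the cone property noted after Lemma~\ref{admi}), run the argument, and let $\epsilon\to 0$ using closedness of the positive semidefinite cone, exactly as in the final paragraph of the proof of Lemma~\ref{HimpliesM}.
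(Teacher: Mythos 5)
Your proposal is correct and takes essentially the same approach as the paper, which offers no separate proof of Theorem~\ref{Interpolation-Reformulation} and simply asserts it as a rephrasing of the Interpolation Theorem. Your explicit unwinding — matching the inequalities~(\ref{admiss conds}) against the denominator-cleared form of the weak-kernel condition in Lemma~\ref{admi}, normalizing restrictions of admissible kernels into $\mathcal{K}_\lambda$ by Schur-congruence, and noting that the Pick condition and the arguments of Lemma~\ref{HimpliesM} and Lemma~\ref{MimpliesD} only engage the kernel through its restriction to the nodes — supplies precisely the bookkeeping the paper leaves implicit.
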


We look at a duality argument for the following classical extremal problem. Let $\lambda_1 = (s_1, p_1), \lambda_2=(s_2, p_2), \ldots ,\lambda_n=(s_n, p_n)$ be $n$ distinct points in the symmetrized bidisk $\mathbb G$. Let $w_1, w_2, \ldots ,w_n$ be $n$ points in $\overline{\mathbb{D}}$. A normal family argument shows that the following infimum is attained.

 \begin{eqnarray}\label{extremal} \rho & = & \inf \{ \| f \|_\infty : f \mbox{ is a holomorphic function from } \mathbb G \mbox{ into } \overline{\mathbb{D}} \nonumber \\
 & & \mbox{ satisfying } f(s_i , p_i) = w_i \mbox{ for } i=1,2, \ldots , n\}.\end{eqnarray}

A function $f$ is called {\em {extremal}} if the infimum above is attained for $f$.
\begin{lem}

If $f$ is an extremal for $\rho$, then there is a $\Gamma$-contraction $(S,P)$ subordinate to $\{(s_1,p_1), (s_2,p_2), \ldots ,(s_n,p_n)\}$ such that $\| f(S,P) \| = \rho$. \end{lem}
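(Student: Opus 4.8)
The plan is to recast $\rho$ as a supremum of operator norms over a compact family of $\Gamma$-contractions, and then to observe that the supremum is attained.

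\medskip
\noindent\textbf{Step 1: a duality formula for $\rho$.} First I would prove
$$\rho=\sup\big\{\,\|f(S,P)\|:(S,P)\text{ a }\Gamma\text{-contraction subordinate to }\{\lambda_1,\dots,\lambda_n\}\,\big\}.$$
The inequality ``$\le$'' is routine: any such $(S,P)$ has $\sigma(S,P)\subseteq\{\lambda_j\}\subseteq\mathbb G$, and since $\Gamma$ is a spectral set for $(S,P)$ we get $\|f(S,P)\|\le\sup_{\mathbb G}|f|=\rho$ (approximate $f$ on a neighbourhood of the finite set $\{\lambda_j\}$ by $f_r(s,p)=f(rs,r^2p)\in A(\Gamma)$, then by polynomials via Oka--Weil, and pass to the limit using continuity of the functional calculus; here we use that $f$ is extremal, so $\|f\|_\infty=\rho\le 1$). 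For ``$\ge$'', suppose the supremum were strictly smaller than $\rho$, so that $\|f(S,P)\|\le\rho-\epsilon$ for some $\epsilon>0$ and all $\Gamma$-contractions $(S,P)$ subordinate to the $\lambda_j$. Apply this to the model pairs $(X_1,X_2)$ on the $n$-dimensional spaces attached to the matrices $k\in\mathcal K_\lambda$ in the paragraph preceding Theorem \ref{Interpolation-Reformulation}. As the $\lambda_j$ are distinct, the vectors $k(\cdot,j)$ form a basis of joint eigenvectors of $(X_1^*,X_2^*)$ with distinct eigenvalues $(\bar s_j,\bar p_j)$, so $(X_1,X_2)$ is simultaneously diagonalisable, is subordinate to $\{\lambda_j\}$, and $f(X_1,X_2)^*k(\cdot,j)=\overline{w_j}\,k(\cdot,j)$; the standard Gram-matrix computation behind the ``easy half'' of Nevanlinna--Pick (as in Lemma \ref{HimpliesM}) then shows $\|f(X_1,X_2)\|\le\rho-\epsilon$ is equivalent to positive semi-definiteness of $\big(\,((\rho-\epsilon)^2-w_i\overline{w_j})\,k(i,j)\,\big)$. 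Since this now holds for every $k\in\mathcal K_\lambda$, the rescaled form of Theorem \ref{Interpolation-Reformulation} yields $g\in H^\infty(\mathbb G)$ with $\|g\|_\infty\le\rho-\epsilon$ and $g(\lambda_i)=w_i$ (positivity already forces $|w_i|\le\rho-\epsilon$); as $\rho-\epsilon<\rho\le 1$, this $g$ maps $\mathbb G$ into $\overline{\mathbb D}$ and so contradicts the definition (\ref{extremal}) of $\rho$. Hence the displayed supremum equals $\rho$.

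\medskip
\noindent\textbf{Step 2: the supremum is attained.} Let $\mathcal C$ be the set of all $\Gamma$-contractions $(S,P)$ subordinate to $\{\lambda_1,\dots,\lambda_n\}$ acting on an $n$-dimensional Hilbert space. It is nonempty, containing $(\mathrm{diag}(s_1,\dots,s_n),\mathrm{diag}(p_1,\dots,p_n))$, and it is compact: it is norm-bounded because $\|S\|=\|s(S,P)\|\le\sup_\Gamma|s_1+s_2|\le 2$ and $\|P\|\le 1$, and it is closed because the $\Gamma$-contraction inequalities (\ref{admiss conds}) are closed conditions, the Taylor spectrum is upper semicontinuous (so $\sigma(S,P)\subseteq\{\lambda_j\}$ survives limits), and $g(S,P)=0$ survives limits for each $g$ holomorphic near the $\lambda_j$ and vanishing there, because the holomorphic functional calculus is continuous on operator tuples whose spectra lie in the fixed compact set $\{\lambda_j\}\subseteq\mathbb G$. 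That same continuity makes $(S,P)\mapsto\|f(S,P)\|$ continuous on $\mathcal C$, so it attains its maximum. Every model pair from $\mathcal K_\lambda$, transported to $\mathbb C^n$ by a unitary, lies in $\mathcal C$, so by Step 1 that maximum equals $\rho$; a maximiser is the desired $\Gamma$-contraction $(S,P)$ subordinate to $\{(s_1,p_1),\dots,(s_n,p_n)\}$ with $\|f(S,P)\|=\rho$.

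\medskip
\noindent\textbf{Where the work is.} The substantive parts are both in Step 1: the identification $f(X_1,X_2)^*k(\cdot,j)=\overline{w_j}\,k(\cdot,j)$, which requires the holomorphic functional calculus to be compatible with the joint-eigenvector decomposition (hence the distinctness of the $\lambda_j$), and the exact translation between $\|f(X_1,X_2)\|\le t$ and the Pick-type positivity of $\big((t^2-w_i\overline{w_j})k(i,j)\big)$. A secondary point is making rigorous the closedness of $\mathcal C$ in Step 2, i.e.\ the upper semicontinuity of the Taylor spectrum and the continuity of the functional calculus on tuples with spectrum in a fixed compact subset of $\mathbb G$. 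With these standard facts granted, no separation argument is needed beyond what already went into the Interpolation Theorem, and the proof is just this duality-plus-compactness package.
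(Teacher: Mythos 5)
Your plan is correct in outline and does establish the lemma, and it shares the two essential ingredients with the paper's argument: the Interpolation Theorem (in the $\mathcal K_\lambda$ form of Theorem \ref{Interpolation-Reformulation}) and a compactness argument. But the two proofs package the compactness differently. The paper proves that the set of matrices $\mathcal K_\lambda$ is compact (the delicate point being that a norm-limit of strictly positive kernels satisfying (\ref{admiss conds}) is again \emph{strictly} positive, for which the distinctness of the $\lambda_j$ is used). Extremality of $f$ then forces some boundary kernel $\delta\in\mathcal K_\lambda$ for which $\big((\rho^2-w_i\overline{w_j})\delta(i,j)\big)$ has a nontrivial null vector $x$, and the pair $(S,P)$ is built directly from $\delta$ with $x$ witnessing $\|f(S,P)\|=\rho$. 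You instead establish a duality formula $\rho=\sup\|f(S,P)\|$ and then appeal to compactness of the set $\mathcal C$ of $\Gamma$-contractions on $\mathbb C^n$ subordinate to $\{\lambda_j\}$. Under the correspondence $k\leftrightarrow (X_1,X_2)$ these are two faces of the same coin, but your version is less constructive (you only get existence of a maximiser) and it trades the paper's ``limit kernel is strictly positive'' lemma for a closedness claim about $\mathcal C$ that you have not quite justified.

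The specific gap is the sentence ``the Taylor spectrum is upper semicontinuous (so $\sigma(S,P)\subseteq\{\lambda_j\}$ survives limits).'' Upper semicontinuity says the spectrum cannot suddenly expand under small perturbations of a \emph{fixed} pair; it does not say that if $\sigma(S_m,P_m)\subseteq\{\lambda_j\}$ for all $m$ and $(S_m,P_m)\to(S,P)$ then $\sigma(S,P)\subseteq\{\lambda_j\}$. (Even for a single matrix, that the roots of the characteristic polynomials converge gives $\sigma(S)\subseteq\{s_j\}$ and $\sigma(P)\subseteq\{p_j\}$, but not that the \emph{joint} spectrum stays inside $\{(s_j,p_j)\}$.) Likewise your appeal to continuity of the functional calculus presupposes the very spectral inclusion you are trying to pass to the limit. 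The clean way to close this is to observe that in finite dimensions a commuting pair is subordinate to the finite set $\{\lambda_j\}$ if and only if it has the form $S=\sum_j s_j E_j$, $P=\sum_j p_j E_j$, where $E_j=q_j(S,P)$ are the mutually annihilating idempotents summing to $I$ obtained from polynomials $q_j$ with $q_j(\lambda_i)=\delta_{ij}$; all of the defining relations ($E_j^2=E_j$, $E_iE_j=0$, $\sum E_j=I$, $S=\sum s_jE_j$, $P=\sum p_jE_j$) are polynomial identities in $(S,P)$ and therefore pass to norm limits, and they also give $f(S,P)=\sum_j w_jE_j$ so that $(S,P)\mapsto\|f(S,P)\|$ is continuous on $\mathcal C$. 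With that substitution your Step 2 is sound and the proof goes through; alternatively you can simply follow the paper and prove compactness of $\mathcal K_\lambda$ instead, which avoids the operator-topology issue altogether.
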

\begin{proof}
Before proving the lemma let us first prove an interesting property of the set $\mathcal{K}_\lambda$. Note that $\mathcal{K}_\lambda$ is a subset of $n \times n$ self-adjoint complex matrices. We claim that $\mathcal{K}_\lambda$ is closed and bounded with respect to the matrix norm and hence $\mathcal{K}_\lambda$ is compact. That $\mathcal{K}_\lambda$ is bounded is easy, as for all $k \in \mathcal{K}_\lambda$, we have
$$
\|k\|\leq \tr(k)=n \text{ \;\;(since $k(i,i)=1$ for all $1\leq i \leq n$)}.
$$
Let $k^{(n)}$ be a sequence in $\mathcal{K}_\lambda$ which converges to $k$ in matrix norm. Then by continuity, it follows that $k$ satisfies all the conditions in (\ref{admiss conds}). Note that for $k$ to be a member of $\mathcal{K}_\lambda$, $k$ has to be strictly positive definite as a matrix. Suppose on the contrary that there exists a vector $0\neq v=(v_1,v_2,\dots, v_n)\in \mathbb{C}^n$ such that $kv=0$. Let $\Lambda_s$ and $\Lambda_p$ denote the diagonal matrix whose $(i,i)$th entries are $s_i$ and $p_i$ respectively. Then we have
\begin{eqnarray*}
0&\leq& \sum_{i,j}(4-s_i\bar{s_j})k(i,j)\bar{v_i}v_j
 \\
 &=& 4\sum_{i,j}k(i,j)\bar{v_i}v_j -\sum_{i,j}k(i,j)s_j v_j\overline{s_i v_i}
 \\
 &=& 4\langle k v,v \rangle -\langle k \Lambda_sv, \Lambda_sv \rangle,
\end{eqnarray*}
which implies that $k(\Lambda_sv)=0$. Similarly, using the second condition in (\ref{admiss conds}) we get $k(\Lambda_pv)=0$. Continuing this way, we get by induction that if $m$ and $n$ are two natural numbers, then $k(\Lambda_s^m\Lambda_p^nv)=0$, which in turn implies that
\begin{eqnarray}\label{contradiction}
k(f(\Lambda_s,\Lambda_p)v)=0 \text{ for every polynomial $f$ in two variables.}
\end{eqnarray}
Note that $f(\Lambda_s,\Lambda_p)$ is the diagonal operator whose $i$th diagonal entry is $f(s_i,p_i)$. Since $v$ is a non-zero vector, it has to have a non-zero co-ordinate. Let it be $v_j$ for some $j$ between $1$ and $n$. Since $(s_1,p_1),(s_2,p_2),\dots,(s_n,p_n)$ are assumed to be distinct, there exists a polynomial $f$ such that $f(s_i,p_i)=0$ for every $i\neq j$ and $f(s_j,p_j)=1$. For this $f$, the vector $f(\Lambda_s,\Lambda_p)v$ has $v_j$ at the $j$th entry and zero elsewhere. When the matrix $k$ is applied to it, one gets the $j$th column of $k$ multiplied by $v_j$. This vector is non-zero because its $j$th entry is $v_j$ which is non-zero. This contradicts (\ref{contradiction}). Therefore $k$ must be strictly positive and hence $\mathcal{K}_\lambda$ is compact.

Let $f$ be an extremal for (\ref{extremal}). Then by Theorem \ref{Interpolation-Reformulation} we have
\begin{eqnarray}\label{extremalf}
\left( \left( \; (\rho^2 - w_i \bar{w}_j) k(i,j) \; \right) \right) \geq 0
\end{eqnarray}
for all $k \in \mathcal K_\lambda$. Since $\mathcal{K}_\lambda$ is compact, there exists a kernel $\delta$ in $\mathcal{K}_\lambda$ and a non-zero vector $x=(x_1,x_2,\dots,x_n)$ such that \
\begin{eqnarray}\label{zero}
\sum_{i,j}(\rho^2 - w_i \bar{w}_j)\delta(i,j)\bar{x_i}x_j = 0.
\end{eqnarray}
Since $\delta$ is strictly positive definite, $\{\delta(\cdot, j): 1 \leq j \leq n\}$ forms a linearly independent set of vectors. Hence the operators $S^*$ and $P^*$ defined by
$$
S^*\delta(\cdot , j) =\bar{s_j}\delta(\cdot ,j) \text{ and  } P^*\delta ( \cdot ,j) =\bar{p_j}\delta (\cdot ,j) \text{ for all $1\leq j \leq   n$}
$$
are uniquely defined on the $n$-dimensional vector space spanned by $\{\delta(\cdot, j): 1 \leq j \leq n\}$. The way $S^*$ and $P^*$ are defined, for each $j=1,2, \ldots ,n$, the pair of numbers $(\bar{s_j},\bar{p_j})$ is a joint eigenvalue for $(S^*, P^*)$ with the corresponding eigenvectors $\delta(\cdot , j)$. Since $(S^*,P^*)$ is a pair of commuting operators on a finite dimensional space, its Taylor joint spectrum is the same as the set of joint eigenvalues which is just the set $\{(\bar{s_j},\bar{p_j}):1\leq j \leq n\}$, which implies that $(S,P)$ is subordinate to $\{(s_j,p_j):1\leq j \leq n\}$. Since $\delta \in \mathcal{K}_\lambda$, the pair $(S,P)$ is a $\Gamma$-contraction by the characterization we obtained in Lemma \ref{admi}. Also note that if $f$ is an extremal for (\ref{extremal}), then for the function $\check{f}$ defined by $\check{f}(s,p)=\overline{f(\bar{s}, \bar{p})}$, we have
$$
f(S,P)^*\delta(\cdot , j) =\check{f}(S^*,P^*)\delta (\cdot , j) =\overline{f(s_j,p_j)}\delta (\cdot , j) =\bar{w_j}\delta(\cdot , j).
$$ By (\ref{extremalf}), it follows that $\|f(S,P)\|\leq \rho$. Now, (\ref{zero}) implies that $\|f(S,P)\|=\rho.$ This proves the lemma.
\end{proof}

The proof of the extension theorem now follows easily. Let $V$ be an $\mathcal A$-von Neumenn set and let $f$ be in $\mathcal A$. Choose a dense set $\{ \lambda_1, \lambda_2, \ldots \}$ for $V$. Let $\rho_n$ be the same as $\rho$ above, now the dependence on $n$ being shown explicitly. Let $f_n$ be an extremal for $\rho_n$. Let $(S_n,P_n)$ be a $\Gamma$-contraction subordinate to $\{ \lambda_1, \lambda_2, \ldots ,\lambda_n \}$ as obtained in the lemma above. Then
\begin{eqnarray*}
\| f_n \| & = & \rho_n \mbox{ because } f_n \mbox{ is extremal for } \rho_n \\
& = & \| f_n(S_n , P_n) \|  \mbox{ by the Lemma above} \\
& = & \| f(S_n , P_n) \| \mbox{ because } (S_n , P_n) \mbox{ is subordinate to } \{ \lambda_1, \lambda_2, \ldots ,\lambda_n \} \\
& & \mbox{ and } f_n = f \mbox{ on } \{ \lambda_1, \lambda_2, \ldots ,\lambda_n \}\\
& \le & \| f \|_V \mbox{ because } (S_n , P_n) \mbox{ is a } \Gamma-\mbox{contraction subordinate to } V \\
& & \mbox{ and } V \mbox{ is an $\mathcal A$-von Neumann set.}
\end{eqnarray*}

Now, by a compactness argument again, this time in the weak* topology of $H^\infty$, there exists a $g$ in $H^\infty (\mathbb G)$ such that $f_n \rightarrow g$ pointwise (and hence $g = f$ on $V$) and $\| g \| \le \| f \|$ because the $f_n$ are dominated by $f$ in norm. Thus, $g$ is an extension of $f$.

The Extension Theorem has also been proved in \cite{ALYrecent} by a different method.

\section{Epilogue}

\begin{enumerate}

\item There is a dual way of expressing the Interpolation Theorem.

\begin{thm}
Let $\lambda_1, \lambda_2, \ldots ,\lambda_n \in \mathbb G$ and $w_1, w_2, \ldots , w_n \in \Dbar$. There exists a function $f$ in the closed unit ball of $H^\infty(\mathbb G)$ that interpolates each $\lambda_i$ to $w_i$ if and only if there is a $C(\Dbar)^*$ valued positive semidefinite kernel $\Delta$ on $\{ \lambda_1, \lambda_2, \ldots ,\lambda_n \} \times \{ \lambda_1, \lambda_2, \ldots ,\lambda_n \}$ such that
$$ 1 - w_i \overline{w_j} = \Delta (\lambda_i , \lambda_j ) \left( 1 - \varphi (\cdot , \lambda_i) \overline{\varphi (\cdot , \lambda_j)} \right).$$
\end{thm}

\begin{proof}
Necessity is obvious by part (\textbf{D}) of the Realization Theorem.

For sufficiency, we apply Lemma \ref{GNS} to get a Hilbert space $H$, a function $L : \mathbb G \rightarrow B( C(\Dbar) , H)$ and a unital $*$-representation $\pi$ of $C(\Dbar)$ on $H$ so that
$$ 1 + \Delta (\lambda_i , \lambda_j) \big( \varphi (\cdot , \lambda_i) \overline{\varphi (\cdot , \lambda_j)} \big) = w_i \overline{w_j} + \Delta (\lambda_i , \lambda_j) 1.$$
Hence
 $$ 1 +   \langle \pi \varphi(\cdot , \lambda_i) L (\lambda_i) 1,  \pi \varphi(\cdot , \lambda_j) L (\lambda_j) 1 \rangle = w_i \overline{w_j} + \langle L (\lambda_i) 1 , L (\lambda_j) 1 \rangle.$$
From here on, the rest of the proof follows the proof of the Interpolation Theorem in Section 4.
\end{proof}

\item Consider the situation of (\ref{ExampleofDelta}). Let $H_\alpha$ be the Hilbert space corresponding to $\delta(\alpha, \cdot, \cdot)$. Then the ingredients of the Realization Theorem can be taken to be direct integrals:
$$H = \int_{\Dbar}^\oplus H_\alpha d\mu(\alpha) \mbox{ and } \pi(f) =  \int_{\Dbar}^\oplus f(\alpha) d\mu(\alpha)$$ for $f \in C(\Dbar)$. Thus, in this case,
$$\pi(\varphi(\cdot , s, p)) = \int_{\Dbar}^\oplus \varphi(\cdot , s, p) d\mu .$$
Compare this with $\mathcal E_\lambda$ of Theorem 11.13 of \cite{AMc-book}. There were only two co-ordinates there - $z_1$ and $z_2$ and hence two Hilbert spaces, so $\mathcal E_\lambda$ acted on the direct sum of $H_1$ and $H_2$. In the case of the symmetrized bidisk, there are uncountably many parametrized co-ordinate functions and hence a direct integral is needed.

\end{enumerate}

 \vspace*{1cm}

\end{document}